\newtheorem{theorem}{Theorem}[section]
\newtheorem{lemma}[theorem]{Lemma}
\newtheorem{corollary}[theorem]{Corollary}
\newtheorem{conjecture}[theorem]{Conjecture}
\theoremstyle{definition}
\newcommand{\cF}{\mathcal{F}}
\newcommand{\cM}{\mathcal{M}}
\newcommand{\cU}{\mathcal{U}}
\DeclareMathOperator{\si}{si}
\DeclareMathOperator{\cl}{cl}
\DeclareMathOperator{\PG}{PG}
\DeclareMathOperator{\GF}{GF}
\DeclareMathOperator{\AG}{AG}
\newcommand{\elem}{\varepsilon}
\newcommand{\del}{\!\setminus\!}
\newcommand{\con}{/}
\begin{document}
\sloppy

\title[Density Hales-Jewett for Matroids]{A Density Hales-Jewett Theorem for Matroids}

\author[Geelen]{Jim Geelen}
\address{Department of Combinatorics and Optimization,
University of Waterloo, Waterloo, Canada}
\thanks{ This research was partially supported by a grant from the
Office of Naval Research [N00014-12-1-0031].}

\author[Nelson]{Peter Nelson}
\address{School of Mathematics, Statistics and Operations Research, Victoria University of Wellington, Wellington, New Zealand}


\subjclass{05B35}
\keywords{matroids, growth rates, Hales-Jewett}
\date{\today}

\begin{abstract}
	We show that, if $\alpha > 0$ is a real number, $n \ge 2$ and $\ell \ge 2$ are integers, and $q$ is a prime power, then every simple matroid $M$ of sufficiently large rank, with no $U_{2,\ell}$-minor, no rank-$n$ projective geometry minor over a larger field than $\GF(q)$, and satisfying $|M| \ge \alpha q^{r(M)}$, has a rank-$n$ affine geometry restriction over $\GF(q)$. This result can be viewed as an analogue of the multidimensional density Hales-Jewett theorem for matroids. 
\end{abstract}

\maketitle

\section{Introduction}

	Furstenberg and Katznelson [\ref{fk}] proved the following result, implying that $\GF(q)$-representable matroids of nonvanishing density and huge rank contain large affine geometries as restrictions:

\begin{theorem}\label{dhjnice}
	Let $q$ be a prime power, $\alpha > 0$ be a real number, and $n$ be a positive integer. If $M$ is a simple $\GF(q)$-representable matroid with $|M| \ge \alpha q^{r(M)}$, and $M$ has sufficiently large rank, then $M$ has an $\AG(n,q)$-restriction. 
\end{theorem}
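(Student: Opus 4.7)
The plan is to derive the theorem from the multidimensional density Hales--Jewett theorem (DHJ) of Furstenberg and Katznelson, which asserts that for every integer $q \geq 2$, real $\beta > 0$, and integer $n \geq 1$, there is an integer $N_0$ such that, for all $N \geq N_0$, every subset $S \subseteq [q]^N$ with $|S| \geq \beta q^N$ contains a combinatorial $n$-dimensional subspace. The key observation linking DHJ to matroid theory is that, identifying $[q]$ with $\GF(q)$ in any way, a combinatorial $n$-subspace of $[q]^N$ is precisely an $n$-dimensional affine subspace of $\GF(q)^N$ of the form $v_0 + \operatorname{span}(\chi_{W_1},\ldots,\chi_{W_n})$, where the $W_i$ are pairwise disjoint, nonempty subsets of coordinates; regarded as a restriction of $\AG(N,q)$, such a flat is isomorphic to $\AG(n,q)$.

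The first step is to convert the projective representation of $M$ into an affine setting to which DHJ can be applied. Fix a representation of $M$ as a subset $X$ of $\PG(r-1,q)$ with $r = r(M)$. A brief averaging over the hyperplanes of $\PG(r-1,q)$, using that each point of $\PG(r-1,q)$ lies outside a $q^{r-1}(q-1)/(q^r-1)$-fraction of those hyperplanes, produces a hyperplane $H^*$ with $|X \setminus H^*| \geq \alpha(q-1)\,q^{r-1}$. Identifying $\PG(r-1,q) \setminus H^*$ with $\AG(r-1,q) \cong [q]^{r-1}$, the set $X' := X \setminus H^*$ has density at least $\beta := \min\{1,\alpha(q-1)\}$ in $[q]^{r-1}$.

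I then apply DHJ with parameters $q$, $\beta$, and $n$: provided $r(M) - 1$ exceeds the Furstenberg--Katznelson threshold $N_0(q,\beta,n)$, the set $X'$ contains a combinatorial $n$-dimensional subspace $C$, which by the opening observation is an affine $n$-flat of $\AG(r-1,q)$ whose $q^n$ points all lie in $M$. Hence $M|C \cong \AG(n,q)$, as required. I do not expect a substantive obstacle: DHJ supplies all of the extremal content as a black box, so the remaining work is the translation between its combinatorial language and matroid theory. The only detail requiring mild care is the projective-to-affine averaging, which must preserve a density bounded away from zero in order to feed DHJ.
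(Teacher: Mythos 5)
The paper does not actually give a proof of Theorem~\ref{dhjnice}; it cites it directly from Furstenberg and Katznelson [\ref{fk}], with the multidimensional density Hales--Jewett theorem [\ref{fk2}] noted only as a later generalisation. Your proposal derives the affine statement from the general DHJ theorem, which is a correct and standard specialisation, so there is no in-paper argument to compare against but also nothing in your derivation that needs repair. The hyperplane-averaging step is sound: each point of $\PG(r-1,q)$ lies off a $q^{r-1}(q-1)/(q^r-1) > (q-1)/q$ fraction of hyperplanes, so some affine chart retains $|X\setminus H^*| \ge \alpha(q-1)q^{r-1}$, i.e.\ density at least $\alpha(q-1)$; and since a simple $\GF(q)$-representable matroid has $|M| \le (q^{r}-1)/(q-1)$, the hypothesis already forces $\alpha(q-1) < 1$, so the truncation by $1$ is harmless. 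Your identification of a combinatorial $n$-subspace with an $\AG(n,q)$-restriction is the one place needing care and is handled correctly: the indicator vectors of the $n$ pairwise disjoint nonempty wildcard sets are linearly independent, the set $v_0 + \sum_i a_i\chi_{W_i}$ is therefore a full $n$-dimensional affine flat, and the restriction of $\PG(r-1,q)$ to its $q^n$ points is exactly $\AG(n,q)$. Applying DHJ with alphabet size $q$, density $\beta = \min\{1,\alpha(q-1)\}$, and dimension $n$ then gives the required restriction once $r(M)-1$ exceeds the DHJ threshold, completing a valid proof.
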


Later, Furstenberg and Katznelson [\ref{fk2}] proved a much more general result; namely the Multidimensional Density Hales-Jewett Theorem, which gives a similar statement in the more abstract setting of words over an arbitrary finite alphabet. Considerably shorter proofs [\ref{dkt},\ref{polymath}] have since been found. We will generalise Theorem~\ref{dhjnice} in a different direction:

\begin{theorem}\label{maineasy}
	Let $q$ be a prime power, $n$ be a positive integer, and $\alpha > 0$ be a real number. If $M$ is a simple matroid with no $U_{2,q+2}$-minor and $|M| \ge \alpha q^{r(M)}$, and $M$ has sufficiently large rank, then $M$ has an $\AG(n,q)$-restriction. 
\end{theorem}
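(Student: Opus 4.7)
The plan is to reduce Theorem~\ref{maineasy} to the $\GF(q)$-representable setting of Theorem~\ref{dhjnice} by first locating a large projective geometry minor in $M$ and then promoting it to a $\GF(q)$-representable restriction of $M$ of comparable density.

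The first step invokes the Growth Rate Theorem of Geelen, Kabell, Kung and Whittle. Kung's theorem, applied to the exclusion of a $U_{2,q+2}$-minor, forces $|M|\le(q^{r(M)}-1)/(q-1)$, so the hypothesis $|M|\ge \alpha q^{r(M)}$ places $M$ at the exponential growth rate of its minor-closed class, with base $q$. The Growth Rate Theorem together with its sharpenings identifying the critical prime power then yields that, for any prescribed integer $N$, if $r(M)$ is large enough in terms of $\alpha$, $q$ and $N$, then $M$ contains a $\PG(N,q)$-minor. I would fix such an $N$ to be chosen much larger than $n$.

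The second step promotes this projective geometry \emph{minor} to a $\GF(q)$-representable \emph{restriction} of $M$. Write $M/C\setminus D\cong \PG(N,q)$; each element $e$ of the minor corresponds to a parallel class $P_e\subseteq E(M)\setminus(C\cup D)$ in $M/C$. The aim is to choose one representative from each $P_e$ (possibly after passing to a sub-$\PG$) so that the resulting transversal spans a restriction of $M$ itself that is $\GF(q)$-representable and has rank at least the threshold demanded by Theorem~\ref{dhjnice} for the prescribed $n$ and residual density. Once such a restriction is obtained, Theorem~\ref{dhjnice} produces an $\AG(n,q)$-restriction inside it, and hence inside $M$, finishing the proof.

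The principal obstacle is this final lifting, because a naive transversal generally does not produce a $\GF(q)$-representable restriction of $M$: the matroid on $E(M)$ can differ from its contraction $M/C$ in ways that destroy representability, and even a single element of $C$ may distort incidences across exponentially many parallel classes. To overcome this, I would pass to progressively larger sub-$\PG(N',q)$-minors of $M/C\setminus D$ and apply a pigeonhole/density argument to the interaction of $C$ with the parallel classes $P_e$, extracting a very large sub-$\PG(n',q)$-minor whose natural lift back to $M$ is $\GF(q)$-representable. Controlling this interplay between $M$ and $M/C$ across exponentially many parallel classes simultaneously is where I expect the bulk of the technical difficulty to lie.
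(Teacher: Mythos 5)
Your overall plan—find a large $\PG(N,q)$-minor, upgrade it to a $\GF(q)$-representable \emph{restriction} of comparable density, then invoke Theorem~\ref{dhjnice}—is not the route the paper takes, and the middle step contains a genuine gap that the paper is specifically designed to avoid. You correctly identify that a naive transversal of the parallel classes $P_e$ need not give a $\GF(q)$-representable restriction, but the proposed remedy (passing to a sub-$\PG$ and applying a pigeonhole/density argument to the interaction of $C$ with the parallel classes) has no clear mechanism. The fundamental obstacle is that $M$ itself may contain \emph{no} large $\GF(q)$-representable restriction of comparable density: one can stack bounded non-representable obstructions over the whole ground set of a projective geometry without dropping density and without creating a $U_{2,q+2}$-minor. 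No choice of transversal and no pigeonhole refinement escapes such an obstruction, and nothing in your sketch quantifies or controls it.

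The paper's proof of Theorem~\ref{firsthalf} (which implies Theorem~\ref{maineasy}) works around exactly this issue by making the obstruction itself the object of study, via the notion of a $(q,h,t)$-\emph{stack}. The argument first reduces to weakly round matroids (Lemma~\ref{weakroundnessreduction}), then takes a maximal stack restriction $S$. If $S$ is large (Case 1), one combines it with a large projective geometry minor, uses Lemma~\ref{stackfindprojection} to locate a flat disjoint from the geometry, and applies Lemma~\ref{projectiondensity} and Theorem~\ref{gk} to produce a $\PG(n-1,q')$-minor with $q'>q$—which cannot happen when $U_{2,q+2}$ is excluded, closing this case. If $S$ is small (Case 2), maximality of the stack guarantees that a suitable dense flat $F$ of $\si(M\con E(S))$ of \emph{bounded} rank $t_{k+1}$ \emph{is} $\GF(q)$-representable; Theorem~\ref{dhj} is applied to that small flat, not to $M$ directly. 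The resulting large $\AG$-restriction then lives in a contraction $M\con E(S)$ with $r(E(S))$ bounded, and Lemma~\ref{liftag} lifts the $\AG$-restriction (not the projective geometry) back to $M$. So the lifting that actually happens is at the level of affine geometries found in a bounded-rank contraction, not at the level of promoting a $\PG$-\emph{minor} to a representable \emph{restriction}; and the density Hales--Jewett input is applied to a bounded-rank piece, not to a spanning substructure. Without the stack concept and this dichotomy, your step two has no way to bound the non-representability, and the plan does not go through.
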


In fact, we prove more. The class of matroids with no $U_{2,q+2}$-minor is just one of many minor-closed classes whose extremal behaviour is qualitatively similar to that of the $\GF(q)$-representable matroids. The following theorem, which summarises several papers [\ref{gkp},\ref{gkw},\ref{gw}], tells us that such classes occur naturally as one of three types:

\begin{theorem}[Growth Rate Theorem]\label{grt}
	Let $\cM$ be a minor-closed class of matroids, not containing all simple rank-$2$ matroids. There exists a real number $c_{\cM} > 0$ such that either:
	\begin{enumerate}[(1)]
		\item $|M| \le c_{\cM}r(M)$ for every simple $M \in \cM$, 
		\item $|M| \le c_{\cM}r(M)^2$ for every simple $M \in \cM$, and $\cM$ contains all graphic matroids, or 
		\item there is a prime power $q$ such that $|M| \le c_{\cM}q^{r(M)}$ for every simple $M \in \cM$, and $\cM$ contains all $\GF(q)$-representable matroids. 
	\end{enumerate}
\end{theorem}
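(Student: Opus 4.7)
Since $\cM$ does not contain all simple rank-2 matroids, there is an integer $\ell \ge 3$ with $U_{2,\ell} \notin \cM$, so every line in a simple $M \in \cM$ has at most $\ell-1$ points. The classical Kung bound already gives an exponential upper bound on $h_\cM(n) := \max\{|M| : M \in \cM \text{ simple of rank } n\}$, namely $h_\cM(n) = O((\ell-2)^n)$. The plan is to refine this into the claimed trichotomy by establishing two dichotomies inside $\cM$: polynomial versus exponential growth, and, within the polynomial regime, linear versus quadratic.

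\emph{Polynomial/exponential dichotomy.} Suppose $h_\cM$ is not bounded by any polynomial. The key input is a \emph{projective-geometry extraction theorem}: any simple matroid with no $U_{2,\ell}$-minor and super-polynomial density has a $\PG(n,q')$-minor, for some prime power $q' \le \ell-2$ and any prescribed $n$. Let $q$ be the largest prime power such that $\cM$ contains $\PG(n,q)$-minors for arbitrarily large $n$; the uniform bound on $q'$ makes this well-defined. Since every $\GF(q)$-representable matroid of rank $n$ embeds in $\PG(n,q)$, minor-closedness of $\cM$ places all $\GF(q)$-representable matroids in $\cM$. For the upper bound $h_\cM(n) \le c_\cM q^n$, I would argue by contradiction: any $M \in \cM$ whose density grossly exceeds $q^{r(M)}$ admits, after a suitable sequence of contractions, a still-denser minor to which the extraction theorem applies to produce a $\PG(n,q')$-minor with $q' > q$, contradicting the maximality of $q$.

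\emph{Linear/quadratic dichotomy.} Now assume $h_\cM$ is polynomially bounded but $h_\cM(n)/n \to \infty$. The second key input is a \emph{graphic extraction theorem}: a simple matroid of super-linear density, with bounded line-length and no $\PG(n,q)$-minor for large $n$, has $M(K_t)$ as a minor for arbitrarily large $t$. Minor-closedness of $\cM$ then places all graphic matroids in $\cM$. The quadratic upper bound $h_\cM(n) = O(n^2)$ follows from the extremal theory of matroids simultaneously excluding $U_{2,\ell}$ and a fixed large projective geometry, via density-increment arguments analogous to those in the exponential case.

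The main obstacle lies in the two extraction theorems. Both rely on first locating a highly connected dense minor, via matroid tangle, connectivity, and flower machinery, and then extracting canonical structure from it: representability arguments produce the projective geometry in the exponential regime, while a Ramsey-style selection on local degrees and short circuits produces the complete-graphic minor in the polynomial regime. Once these tools are available, the prime-power restriction on $q$ (forced by the fact that $\PG(n,q')$ is a matroid only for prime powers $q'$) and the remaining upper bounds and containment statements follow by comparatively direct counting and representability reasoning, with the division of labour corresponding roughly to the three cited papers.
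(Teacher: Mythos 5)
The paper does not prove this theorem; it quotes it as a known result summarising the three cited papers [\ref{gkp}, \ref{gkw}, \ref{gw}], and then uses it as a black box alongside the more specific Theorem~\ref{gk}. So there is no proof in this source to compare your outline against. That said, your sketch does describe the correct high-level strategy from the literature: use the Geelen--Kabell projective-geometry extraction result (essentially Theorem~\ref{gk} in this paper) to isolate the exponential case and pin down the base $q$, then use a Geelen--Whittle-style clique extraction together with the extremal bound for matroids excluding both a long line and a fixed projective geometry to separate the quadratic case from the linear one.

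Be clear, though, about where the work actually is. Essentially all of the technical content of the Growth Rate Theorem resides in the two ``extraction theorems'' and in the quadratic extremal bound, which you invoke as black boxes with a gesture towards ``tangle, connectivity, and flower machinery.'' Reducing the statement to those inputs is a plan of attack, not a proof: the inputs \emph{are} the main theorems of the cited papers. Two smaller points worth tightening: a rank-$n$ simple $\GF(q)$-representable matroid is a restriction of $\PG(n-1,q)$, not of $\PG(n,q)$; and the quadratic upper bound requires excluding $\PG(m-1,q)$ simultaneously for some fixed $m$ and \emph{every} prime power $q \le \ell-2$, a finite family which you must extract from the failure of the exponential alternative (for each $q$ separately, by the definition of $q$ as the largest such base) rather than from merely ``a fixed large projective geometry.'' With those caveats, your outline is a faithful summary of how the trichotomy is assembled from its constituent results, but it does not constitute an independent proof of the theorem.
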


We call a class $\cM$ satisfying (3) \emph{base-$q$ exponentially dense}. It is clear that these classes are the only ones that contain arbitrarily large affine geometries, and that the matroids with no $U_{2,q+2}$-minor form such a class. Our main result, which clearly implies Theorem~\ref{maineasy}, is the following: 

\begin{theorem}\label{main}
	Let $\cM$ be a base-$q$ exponentially dense minor-closed class of matroids, $\alpha > 0$ be a real number, and $n$ be a positive integer.  If $M \in \cM$ is simple, satisfies $|M| \ge \alpha q^{r(M)}$, and has sufficiently large rank, then $M$ has an $\AG(n,q)$-restriction. 
\end{theorem}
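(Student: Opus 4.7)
My plan is to reduce Theorem~\ref{main} to Theorem~\ref{dhjnice} by extracting from $M$ a large $\GF(q)$-representable restriction that retains a constant fraction of $M$'s density, and then applying Furstenberg-Katznelson to this restriction. Since an $\AG(n,q)$-restriction of a restriction of $M$ is an $\AG(n,q)$-restriction of $M$, such a reduction suffices.

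First I would invoke existing structure results from the growth-rate literature --- in the direction developed by Geelen, Kung and Whittle in the proofs of Theorem~\ref{grt} --- to show that any simple $M \in \cM$ of sufficiently large rank with $|M| \ge \alpha q^{r(M)}$ contains a $\PG(k,q)$-minor, where $k$ may be taken as large as desired in terms of $n$, $q$ and $\alpha$. Writing this minor as $M/C \setminus D$ with $C$ independent, each point of $\PG(k,q)$ corresponds to a nonempty ``fiber'' $F_x \subseteq E(M)\setminus(C \cup D)$ of elements parallel to $x$ after contracting $C$; by the density hypothesis on $M$, many fibers are large.

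Next, and this is the heart of the argument, I would try to convert this $\PG(k,q)$-minor into a $\GF(q)$-representable restriction of $M$ of large rank and density at least $\alpha' q^{r}$. My approach would be an iterated Ramsey-and-pigeonholing scheme: classify fibers by their ``type'' relative to $C$ (say, by the flat of $M|C$ that each element of a fiber spans with the corresponding point $x$), pass to large homogeneous sub-families at each stage, and use density to preserve a constant fraction of mass throughout. If after sufficient refinement a large sub-collection of the surviving fibers, together with a carefully chosen subset of $C$, forms a $\GF(q)$-representable restriction $N = M|S$ with $r(N) \to \infty$ as $r(M) \to \infty$ and $|N| \ge \alpha' q^{r(N)}$, then Theorem~\ref{dhjnice} applied to $N$ delivers the desired $\AG(n,q)$-restriction.

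The main obstacle is plainly the middle step. Minor-to-restriction conversions cannot be done naively: choosing one element from each fiber recovers only the original $\PG(k,q)$ as a minor (not a restriction), because $C$ may impose extra dependencies among the chosen elements in $M$. What is needed is real use of the ``no larger projective geometry minor'' condition built into base-$q$ exponential density, combined with fine control on how fibers sit relative to $C$. I expect the argument to hinge on proving, through iterated cleaning, that for a suitably selected subset of fibers the representatives span their naive rank in $M$ and no more --- and I anticipate that this step is what forces $r(M)$ to be astronomically large in terms of $n$, $q$, and $\alpha$.
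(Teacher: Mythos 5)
Your high-level reduction---obtain a dense $\GF(q)$-representable substructure and then apply Furstenberg--Katznelson---is the right frame, but the middle step you rightly flag as "the heart of the argument" is not filled in, and the route you sketch for it is not the one the paper takes and has a structural problem you do not resolve. You propose to find a dense $\GF(q)$-representable \emph{restriction} of $M$ itself by cleaning up the fibers of a $\PG(k,q)$-minor. There is no reason such a restriction should exist: base-$q$ exponential density and having a large $\PG(k,q)$-minor do not by themselves put a dense representable subset inside $M$, and iterated pigeonholing on fiber "types" only partitions $E(M)$, it does not remove the dependencies that $C$ imposes among representatives. You acknowledge the obstacle but what you propose to do about it ("prove that for a suitably selected subset of fibers the representatives span their naive rank in $M$ and no more") is exactly the thing you would need a new idea to prove, and you do not supply one.

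The paper sidesteps this by never trying to find a representable restriction of $M$. Instead it works in a \emph{contraction} $M/E(S)$ where $S$ is a bounded-rank \emph{stack} (Section 4): a maximal chain of non-$\GF(q)$-representable chunks. Maximality of the stack forces every bounded-rank flat of $M/E(S)$ to be $\GF(q)$-representable, and a density/majority argument finds one such flat $F$ that is dense; Theorem~\ref{dhj} then gives a huge affine geometry inside $(M/E(S))|F$. But that affine geometry lives in a contraction, not in $M$, so a second, genuinely separate tool is needed: Lemma~\ref{liftag}, which shows that an enormous $\AG$-restriction of $M/C$ (for $C$ of bounded rank) yields an $\AG(n-1,q)$-restriction of $M$ itself, and which itself invokes Hales--Jewett again. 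Your plan has no analogue of either the stack machinery (which is what quantitatively exploits the "no larger projective geometry minor" condition, via Lemma~\ref{stackwin} and the weak-roundness reduction) or the lifting lemma. Without both, the reduction to Theorem~\ref{dhjnice} does not close.
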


Finding such a highly structured restriction seems very surprising, given the apparent wildness of general exponentially dense classes. This will be proved using Theorem~\ref{grt} and a slightly more technical statement, Theorem~\ref{firsthalf}; the proof extensively uses machinery developed in [\ref{gn}], [\ref{gn2}], [\ref{sqf}] and [\ref{thesis}].

We would like to prove a result corresponding to Theorem~\ref{main} for \emph{quadratically dense} classes satisfying condition (2) of Theorem~\ref{grt}. The following is a corollary of the Erd\H os-Stone Theorem [\ref{es}]:

\begin{theorem}
	Let $\alpha > 0$ be a real number and $n$ be a positive integer. If $G$ is a simple graph such that $|E(G)| \ge \alpha |V(G)|^2$ and $|V(G)|$ is sufficiently large, then $G$ has a $K_{n,n}$-subgraph. 
\end{theorem}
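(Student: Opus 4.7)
The plan is to derive this from the K\H{o}v\'ari--S\'os--Tur\'an theorem, which is the bipartite special case of Erd\H{o}s--Stone and admits a short double-counting proof. Set $N = |V(G)|$ and, for each vertex $v$, write $\Gamma(v)$ for its neighbourhood and $d(v) = |\Gamma(v)|$ for its degree. Suppose for contradiction that $G$ has no $K_{n,n}$-subgraph. I would count pairs $(v, S)$ in two ways, where $v$ is a vertex and $S$ is an $n$-subset of $\Gamma(v)$.

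Summing over $v$, the total count equals $\sum_{v} \binom{d(v)}{n}$. Since $\binom{x}{n}$ is convex in $x$ and $\sum_v d(v) = 2|E(G)| \ge 2\alpha N^2$, Jensen's inequality yields
\[
\sum_{v \in V(G)} \binom{d(v)}{n} \ge N \binom{2\alpha N}{n},
\]
which is of order $N^{n+1}$. Summing instead over $n$-subsets $S \subseteq V(G)$, the same count equals $\sum_S |\{v : S \subseteq \Gamma(v)\}|$. The absence of a $K_{n,n}$-subgraph means that no $n$-subset lies in $n$ distinct neighbourhoods, so each term is at most $n-1$; the total is therefore at most $(n-1)\binom{N}{n}$, which is of order $N^n$. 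For $N$ sufficiently large in terms of $\alpha$ and $n$, the lower bound exceeds this upper bound, giving the required contradiction.

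There is no substantial obstacle: this is essentially the classical K\H{o}v\'ari--S\'os--Tur\'an proof. The only care needed is to interpret $\binom{x}{n}$ as the polynomial $x(x-1)\cdots(x-n+1)/n!$ in Jensen's inequality, so that vertices of degree less than $n$ contribute zero rather than causing any issues. Alternatively, one may invoke Erd\H{o}s--Stone--Simonovits directly: since $\chi(K_{n,n}) = 2$, the formula $\mathrm{ex}(N, K_{n,n}) = o(N^2)$ holds, immediately implying the theorem.
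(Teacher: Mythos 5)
Your argument is correct, but it takes a different route from the paper: the paper does not prove this statement at all, it simply states it as a corollary of the Erd\H{o}s--Stone theorem (since $\chi(K_{n,n}) = 2$ forces $\mathrm{ex}(N,K_{n,n}) = o(N^2)$), whereas you give a self-contained double-count in the style of K\H{o}v\'ari--S\'os--Tur\'an. Your route is more elementary and in fact yields the much stronger quantitative bound $\mathrm{ex}(N,K_{n,n}) = O(N^{2-1/n})$, rather than the mere $o(N^2)$ that the Erd\H{o}s--Stone citation provides; the paper's route is shorter given the black box but conceptually heavier. One small point about the Jensen step: the issue is not merely that $\binom{x}{n}$ vanishes at small integers, but that the polynomial $x(x-1)\cdots(x-n+1)/n!$ is not convex on all of $\mathbb{R}$; the standard fix is to apply Jensen to the convex function equal to $\binom{x}{n}$ for $x \ge n-1$ and to $0$ for $x < n-1$, which agrees with $\binom{x}{n}$ on nonnegative integers, and then to note that the average degree $\bar d \ge 2\alpha N \ge n-1$ for $N$ large so the resulting bound $N\binom{\bar d}{n} \ge N\binom{2\alpha N}{n}$ is legitimate. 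A second small point you implicitly use: if $S \subseteq \Gamma(v)$ then $v \notin S$ since $G$ is simple, so the $n$ witnesses $v$ are automatically disjoint from $S$ and genuinely yield a $K_{n,n}$; it is worth saying so. With those two remarks made explicit the proof is complete.
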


In light of this, we expect that the unavoidable restrictions of dense matroids in a quadratically dense class are the cycle matroids of large complete bipartite graphs. 
\begin{conjecture}
	Let $\cM$ be a quadratically dense minor-closed class of matroids, $\alpha > 0$ be a real number, and $n$ be a positive integer. If $M \in \cM$ is simple, satisfies $|M| \ge \alpha r(M)^2$, and has sufficiently large rank, then $M$ has an $M(K_{n,n})$-restriction. 
\end{conjecture}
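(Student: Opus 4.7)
The plan is to mimic the proof strategy for Theorem~\ref{main}, with the K\H{o}v\'ari--S\'os--Tur\'an theorem (the bipartite case of Erd\H{o}s--Stone) replacing the multidimensional density Hales--Jewett theorem. By Theorem~\ref{grt}(2), $\cM$ contains all graphic matroids, so the role played by $\GF(q)$-representable matroids (and ultimately $\AG(n,q)$-restrictions) in the exponential setting should be played by graphic matroids (and ultimately $M(K_{n,n})$-restrictions) in the quadratic setting.

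Concretely, I would first try to produce, inside a given simple $M \in \cM$ of huge rank with $|M| \ge \alpha r(M)^2$, a \emph{spanning graphic restriction} of comparable density: a restriction $N$ of $M$ isomorphic to $M(G)$ for some graph $G$ on $r(M)+1$ vertices, with $|E(G)| \ge \alpha' r(M)^2$ for some $\alpha' = \alpha'(\alpha) > 0$. The structural machinery used to establish condition (2) of Theorem~\ref{grt} and the related results in [\ref{gn}], [\ref{gn2}], [\ref{sqf}] and [\ref{thesis}] should be the starting point, though substantial sharpening is likely required. Once $N$ is in hand, the K\H{o}v\'ari--S\'os--Tur\'an theorem produces a $K_{n,n}$-subgraph of $G$, and hence an $M(K_{n,n})$-restriction of $N$ and of $M$, as soon as $r(M)$ is large enough in terms of $n$ and $\alpha'$.

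The main obstacle, as in the exponential case, is producing a genuine spanning \emph{restriction} of graphic type rather than merely a graphic minor of small codimension. Available machinery typically delivers a dense graphic minor together with a rough ``frame-like'' structural description, but translating this into a graphic restriction that still captures a constant fraction of $|M|$ is delicate, since dense non-graphic decorations (Dowling-type elements over large groups, long-line extensions of graphic flats, and so on) can inflate $|M|$ without directly contributing graphic structure and cannot be removed by a single minor operation. I expect the right approach is a density-increment or stability argument: either one finds an $M(K_{n,n})$-restriction directly, or one locates a bounded-rank flat whose contraction yields a simple minor of strictly greater density relative to the $r^2$ bound, and iterates. Controlling the decorations via connectivity reductions of the sort developed in [\ref{gn2}] and [\ref{thesis}] is likely to be the technical heart of any complete proof.
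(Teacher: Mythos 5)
You are attempting to prove a statement that the paper explicitly labels a \emph{conjecture}; the authors offer no proof, so there is nothing internal to compare against. What you have written is a research plan, not a proof, and you acknowledge as much (``the technical heart of any complete proof''). That said, two concrete points are worth raising.

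First, the analogy you draw with Theorem~\ref{main} misdescribes what the paper actually does in the exponential case. The proof of Theorem~\ref{firsthalf} never produces a \emph{spanning} $\GF(q)$-representable restriction of $M$ with comparable density, which would be the analogue of your proposed ``spanning graphic restriction.'' Instead, it reduces to the weakly round case, uses the stack machinery (Lemmas~\ref{stackinprojection}--\ref{stackwin}) to dispose of matroids with many low-rank non-representable substructures, and in the remaining case locates a single \emph{bounded-rank} flat $F$ such that $M_0|F$ is $\GF(q)$-representable and dense enough for Theorem~\ref{dhj}; it then uses Lemma~\ref{liftag} to lift the $\AG$-restriction found in $F$ back through the contraction. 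So the correct quadratic analogue of your program would be to find a bounded-rank flat whose restriction is graphic and quadratically dense, find $K_{n,n}$ there via K\H{o}v\'ari--S\'os--Tur\'an, and lift. Your proposal as stated targets the wrong object.

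Second, and more seriously, the step you defer is exactly where the difficulty lies, and it is not clear the exponential toolkit transfers. In the exponential case the obstruction is a stack of bounded-rank non-$\GF(q)$-representable pieces, and Lemma~\ref{stackwin} shows that too many of them force a $\PG(n-1,q')$-minor with $q'>q$, which is excluded because $\cM$ is base-$q$. In the quadratic case the class excludes \emph{all} long lines, so the analogous ``too many non-graphic pieces force a forbidden minor'' step would need a completely different target structure and a correspondingly different version of Lemmas~\ref{stackfindprojection} and \ref{stackwin}; you have not indicated what that target is or why an excess of Dowling- or long-line-type decorations would produce it. Likewise the lifting step: Lemma~\ref{liftag} exploits that the contraction of a small set preserves a large parallel class lying inside a representable flat, and again relies on Theorem~\ref{dhj} applied to a bounded-rank representable piece. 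A graphic analogue would need a statement that a bounded lift of a dense graph contains $K_{n,n}$, which is plausible but unproven here. In short, your outline identifies the right high-level ingredients (K\H{o}v\'ari--S\'os--Tur\'an in place of density Hales--Jewett, graphic in place of representable), but the two lemmas that carry the actual weight of the paper's argument have no stated quadratic counterparts, and without them the proposal does not constitute a proof.
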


\section{Preliminaries}

We follow the notation of Oxley [\ref{oxley}]. For a matroid $M$, we also write $|M|$ for $|E(M)|$, and $\elem(M)$ for $|\si(M)|$, or the number of points in $M$. If $\ell \ge 2$ is an integer, we write $\cU(\ell)$ for the class of matroids with no $U_{2,\ell+2}$-minor.

The next theorem, a constituent of Theorem~\ref{grt}, follows easily from the two main results of [\ref{gkp}]. 

\begin{theorem}\label{gk}
	There is a real-valued function $\alpha_{\ref{gk}}(n,\gamma,\ell)$ so that, if $\ell \ge 2$ and $n \ge 2$ are integers, $\gamma > 1$ is a real number, and $M \in \cU(\ell)$ satisfies $\elem(M) \ge \alpha_{\ref{gk}}(n,\gamma,\ell)\gamma^{r(M)}$, then $M$ has a $\PG(n-1,q)$-minor for some $q > \gamma$. 
\end{theorem}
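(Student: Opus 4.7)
We argue by contrapositive: assume $M \in \cU(\ell)$ has no $\PG(n-1, q)$-minor for any prime power $q > \gamma$, and aim to bound $\elem(M)$ by $c \cdot \gamma^{r(M)}$ for a constant $c = c(n, \gamma, \ell)$. Let $q_0$ be the largest prime power with $q_0 \le \gamma$. Then $M$ avoids $\PG(n-1, q')$-minors for every prime power $q' > q_0$, so $M$ belongs to the minor-closed subclass of $\cU(\ell)$ consisting of matroids whose projective geometry minors use only fields of size at most $q_0$.

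The proof is then an immediate invocation of the two main results of [gkp]. The first is qualitative: any sufficiently dense matroid in $\cU(\ell)$ must contain \emph{some} $\PG(n-1, q')$-minor, which is needed to justify that the ``largest base field of a PG minor'' is a well-defined quantity in the regime we care about. The second is the matching density bound that refines the qualitative statement by restricting the base field; explicitly, it provides a function $h(n, q_0, \ell)$ such that every simple matroid in $\cU(\ell)$ whose projective geometry minors live only over fields of size at most $q_0$ satisfies $\elem(M) \le h(n, q_0, \ell) \cdot q_0^{r(M)}$. Substituting $q_0 \le \gamma$ gives $\elem(M) \le h(n, q_0, \ell) \cdot \gamma^{r(M)}$. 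Setting $\alpha_{\ref{gk}}(n, \gamma, \ell) := h(n, q_0, \ell) + 1$, with $q_0$ the largest prime power at most $\gamma$, produces the desired contrapositive.

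There is essentially no obstacle to this argument: all the heavy lifting has been done in [gkp], and the theorem merely repackages their density bound by choosing the cutoff $q_0$ implicitly in terms of the real parameter $\gamma$. The one point requiring a moment's care is the discreteness of the set of prime powers, which is why one uses the largest prime power below $\gamma$ rather than $\gamma$ itself; the function $\alpha_{\ref{gk}}$ is permitted to depend arbitrarily on $\gamma$, so absorbing this choice into its definition is harmless. Once this is noted, the argument collapses to a single line of contrapositive reasoning.
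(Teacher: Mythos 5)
Your overall approach --- reduce the real parameter $\gamma$ to a prime power and invoke the density bounds of [gkp] --- is essentially what the paper intends; the paper gives no more detail than ``follows easily from the two main results of [gkp].'' However, your construction has a genuine gap.

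You define $q_0$ as ``the largest prime power with $q_0 \le \gamma$'' and build $\alpha_{\ref{gk}}$ on top of $q_0$. This is fine for $\gamma \ge 2$, but the theorem requires only $\gamma > 1$, and when $1 < \gamma < 2$ there is no prime power $q_0 \le \gamma$, so your argument produces nothing. This is not an idle edge case: the paper applies Theorem~\ref{gk} with $\gamma = q - \tfrac{1}{2}$ in the proof of Theorem~\ref{firsthalf}, so when $q=2$ one needs precisely $\gamma = 1.5$. To handle $1 < \gamma < 2$ the contrapositive hypothesis becomes ``$M$ has no $\PG(n-1,q)$-minor over \emph{any} prime power $q$,'' and one needs to know that such matroids in $\cU(\ell)$ have density that is $o(\gamma^{r})$ for every $\gamma > 1$ --- i.e., a polynomial-type bound, not the exponential $h\cdot q_0^{r}$ bound you invoke. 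This is in fact where the \emph{other} main result of [gkp] (or a comparable polynomial density statement) does real work. Your explanation of why the first result is needed --- ``to justify that the `largest base field of a PG minor' is a well-defined quantity'' --- is not a role it actually plays in the contrapositive, and obscures the place where it is genuinely required. Once the small-$\gamma$ regime is covered (taking $\alpha_{\ref{gk}}$ large enough to absorb the finitely many ranks where the polynomial bound exceeds $\gamma^{r}$), the rest of your bookkeeping, including absorbing the prime-power rounding into $\alpha_{\ref{gk}}$, is fine.
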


The next theorem is due to Kung [\ref{kungextremal}].

\begin{theorem}\label{kung}
	If $\ell \ge 2$ and $M \in \cU(\ell)$, then $\elem(M) \le \frac{\ell^{r(M)}-1}{\ell-1}$. 
\end{theorem}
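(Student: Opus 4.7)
The plan is to prove the bound by induction on $r = r(M)$, using the fact that having no $U_{2,\ell+2}$-minor is exactly the statement that every rank-$2$ flat of $M$ has at most $\ell+1$ points.

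For the base case $r = 1$, any simple matroid of rank $1$ satisfies $\elem(M) = 1 = \tfrac{\ell-1}{\ell-1}$. For the inductive step with $r \ge 2$, I would pick any element $e$ of $M$ and pass to the contraction $M/e$, which lies in $\cU(\ell)$ (the class is minor-closed) and has rank $r-1$. The key observation is that the rank-$1$ flats of $\si(M/e)$ are in bijection with the rank-$2$ flats of $M$ containing $e$: each such flat $L$ is a line through $e$ and, after contracting $e$, the remaining elements of $L$ become parallel in $M/e$, contributing a single point of $\si(M/e)$.

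Since $M \in \cU(\ell)$, every line of $M$ has at most $\ell+1$ points, so each line through $e$ contributes at most $\ell$ points of $M$ distinct from $e$. Partitioning the points of $M$ according to the line through $e$ on which they lie then gives
\[
\elem(M) \le 1 + \ell \cdot \elem(M/e).
\]
Applying the inductive hypothesis $\elem(M/e) \le \tfrac{\ell^{r-1}-1}{\ell-1}$ and simplifying yields
\[
\elem(M) \le 1 + \ell \cdot \frac{\ell^{r-1}-1}{\ell-1} = \frac{\ell^r - 1}{\ell - 1},
\]
which completes the induction.

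There is no real obstacle here; the only subtlety worth noting is the correspondence between points of $\si(M/e)$ and lines of $M$ through $e$, together with the fact that minor-closedness of $\cU(\ell)$ lets the induction pass through.
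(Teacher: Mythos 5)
Your induction is correct and is essentially the standard proof of Kung's bound; the paper itself does not prove this statement but cites Kung's survey (reference [\ref{kungextremal}]), so there is no in-paper argument to compare against. One small misstatement worth flagging: excluding a $U_{2,\ell+2}$-minor is \emph{not} equivalent to every line having at most $\ell+1$ points --- for instance, $U_{3,\ell+3}$ has every line of size two yet has a $U_{2,\ell+2}$-minor (contract a point) --- only the forward implication holds. Your proof does not actually rely on the false converse, since you pass the hypothesis to $M/e$ via minor-closedness of $\cU(\ell)$ rather than via the line-length condition, so the argument goes through; but the phrase ``is exactly the statement'' should be weakened. Also, $e$ should be taken to be a nonloop (equivalently, assume $M$ simple, which is harmless since $\elem$ is preserved by simplification) so that $r(M/e)=r-1$.
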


We will sometimes use the cruder estimate $\elem(M) \le (\ell+1)^{r(M)-1}$ for ease of notation, such as in the following easy corollary: 

\begin{corollary}\label{kungrel}
	If $\ell \ge 2$ is an integer, $M \in \cU(\ell)$, and $C \subseteq E(M)$, then $\elem(M \con C) \ge (\ell+1)^{-r_M(C)}\elem(M)$.
\end{corollary}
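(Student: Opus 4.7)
The plan is a direct counting argument that maps each point of $M$ into a rank-$(r+1)$ flat of $M$ through $\cl_M(C)$, where $r := r_M(C)$, exploiting the fact that these flats are in bijection with the points of $\si(M\con C)$.

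Set $F := \cl_M(C)$. The key observation is that two elements $e, f \in E(M)\setminus F$ are parallel in $M\con C$ precisely when $f \in \cl_M(F \cup \{e\})$; consequently the points of $\si(M\con C)$ correspond bijectively to the rank-$(r+1)$ flats of $M$ properly containing $F$, and $\elem(M\con C)$ equals the number of such flats.

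For any such flat $G$, the restriction $M|G$ is a rank-$(r+1)$ matroid in $\cU(\ell)$, so the cruder Kung estimate recalled just above yields $\elem(M|G) \le (\ell+1)^{(r+1)-1} = (\ell+1)^r$. I would then define a map $\phi$ from the points of $M$ to the rank-$(r+1)$ flats of $M$ containing $F$: send each point $p \notin F$ to the unique such flat $\cl_M(F \cup \{p\})$ that contains it, and send each point $p \in F$ to any fixed rank-$(r+1)$ flat through $F$. Every preimage $\phi^{-1}(G)$ sits inside $G$, so $|\phi^{-1}(G)| \le \elem(M|G) \le (\ell+1)^r$. Summing over the $\elem(M\con C)$ flats in the image yields
\[
    \elem(M) \;\le\; (\ell+1)^r \cdot \elem(M\con C),
\]
which rearranges to the desired inequality.

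I do not anticipate any substantive obstacle here, since the result is advertised as an easy consequence of the cruder form of Kung's theorem; the only piece requiring minor care is the handful of points lying inside $F$, which are absorbed by assigning them all to a single flat and noting that the $(\ell+1)^r$ bound for that flat already accounts for them.
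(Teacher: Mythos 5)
Your proof is correct and follows essentially the same line as the paper's: both identify the points of $M\con C$ with the rank-$(r_M(C)+1)$ flats of $M$ through $\cl_M(C)$, bound the number of points in each such flat by $(\ell+1)^{r_M(C)}$ via Kung's theorem, and sum. Your map $\phi$ simply makes explicit the step the paper states as $\elem(M)\le\sum_{F\in\cF}\elem(M|F)$.
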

\begin{proof}
	Let $\cF$ be the collection of rank-$(r_M(C) + 1)$ flats of $M$ containing $C$. We have $\elem(M|F) \le \frac{\ell^{r_M(C)+1}-1}{\ell-1} \le (\ell+1)^{r_M(C)}$ for each $F \in \cF$. Moreover, $|\cF| = \elem(M \con C)$, and $\elem(M) \le \sum_{F \in \cF}\elem(M|F)$; the result follows.
\end{proof}

We apply both the above results freely. 

The next result follows from [\ref{gn2}, Lemma 3.1]. 

\begin{lemma}\label{projectiondensity}
	Let $q$ be a prime power, $k \ge 0$ be an integer, and $M$ be a matroid with a $\PG(r(M)-1,q)$-restriction $R$. If $F$ is a rank-$k$ flat of $M$ that is disjoint from $E(R)$, then $\elem(M \con F) \ge \frac{q^{r(M \con F) + k}-1}{q-1} - q\frac{q^{2k}-1}{q^2-1}$. 
\end{lemma}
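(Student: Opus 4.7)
The strategy is to count parallel classes of $M \con F$ supported on $E(R)$. Since $\cl_M(F) = F$ is disjoint from $E(R)$, every $e \in E(R)$ is a non-loop of $M \con F$, and two elements $e, e' \in E(R)$ are parallel in $M \con F$ exactly when $e' \in \cl_M(F \cup \{e\})$. Let $\cE$ denote the set of rank-$(k+1)$ flats $F'$ of $M$ containing $F$ with $F' \cap E(R) \ne \emptyset$. Then $\elem(M \con F) \ge |\cE|$, and since each $e \in E(R)$ lies in exactly one such flat,
\[
\sum_{F' \in \cE} |F' \cap E(R)| \;=\; |E(R)| \;=\; \tfrac{q^{r(M)}-1}{q-1} \;=\; \tfrac{q^{r(M \con F)+k}-1}{q-1}.
\]
Since $R = M|E(R)$, each $F' \cap E(R)$ is a flat of $R$ of some rank $j(F') \in \{1,\ldots,k+1\}$, so $|F' \cap E(R)| = (q^{j(F')}-1)/(q-1)$. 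Rearranging, the lemma reduces to the \emph{excess inequality}
\[
\sum_{F' \in \cE} \tfrac{q^{j(F')}-q}{q-1} \;\le\; q \cdot \tfrac{q^{2k}-1}{q^2-1}, \tag{$\star$}
\]
in which only the ``heavy'' flats with $j(F') \ge 2$ contribute.

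I would prove $(\star)$ by induction on $k$; the base case $k=0$ is immediate because $R$ is simple, forcing $|F' \cap E(R)| = 1$ for every $F' \in \cE$. For the inductive step, fix a basis $B$ of $M|F$, pick $f \in B$, and set $F_0 := \cl_M(B \setminus \{f\})$. Then $F_0$ is a rank-$(k-1)$ flat of $M$ with $F_0 \cap E(R) = \emptyset$ and $F = \cl_M(F_0 \cup \{f\})$, so the inductive hypothesis applied to $F_0$ yields
\[
\elem(M \con F_0) \;\ge\; \tfrac{q^{r(M)}-1}{q-1} - q \cdot \tfrac{q^{2k-2}-1}{q^2-1}.
\]
Writing $N := \si(M \con F_0)$ and $\bar f$ for the image of $f$ in $N$, the points of $\si(M \con F)$ are in bijection with the rank-$2$ flats of $N$ through $\bar f$; a routine double count (each non-$\bar f$ point of $N$ lies on a unique such flat) gives
\[
\elem(M \con F_0) - \elem(M \con F) \;=\; 1 + \sum_{L \ni \bar f}(|L| - 2),
\]
where $L$ ranges over rank-$2$ flats of $N$ through $\bar f$. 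Combining with the inductive estimate and the identity $q(q^{2k}-1)/(q^2-1) - q(q^{2k-2}-1)/(q^2-1) = q^{2k-1}$, the inductive step reduces to the bound
\[
\sum_{L \ni \bar f}(|L| - 2) \;\le\; q^{2k-1} - 1. \tag{$\star\star$}
\]

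The main obstacle is verifying $(\star\star)$. Each rank-$2$ flat $L \ni \bar f$ in $N$ lifts to a rank-$(k+1)$ flat $F' \supseteq F$ of $M$, and $|L|$ counts the rank-$k$ flats of $M$ containing $F_0$ and lying inside $F'$ (one of which is $F$ itself). I would split $|L|-2$ according to whether these rank-$k$ flats meet $E(R)$: the $E(R)$-contribution is controlled by the rank-$j(F')$ projective-geometry structure of $F' \cap E(R)$ through a counting argument inside $F'$ analogous to the derivation of $(\star)$, while the remainder should yield to a further inductive application, plausibly to a hyperplane of $F_0$ or a sub-flat of $F$. The delicate part is arranging the bookkeeping so that the two contributions, summed over all $L \ni \bar f$, combine to exactly $q^{2k-1}-1$; the telescoping form $q \cdot (q^{2k}-1)/(q^2-1) = q + q^3 + \cdots + q^{2k-1}$ of the right-hand side of $(\star)$ strongly suggests that each inductive round adds exactly one new term $q^{2k-1}$, but executing this rigorously requires careful tracking of how parallel classes of $M \con F_0$ distribute across the heavy flats in $\cE$.
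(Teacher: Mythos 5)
The paper does not actually prove Lemma~\ref{projectiondensity}; it is quoted directly from [\ref{gn2}, Lemma 3.1]. So I am evaluating your argument on its own merits.

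Your reduction to $(\star)$ is correct and clean: the rank-$(k+1)$ flats of $M$ through $F$ are pairwise disjoint outside $F$, each $e\in E(R)$ lies in exactly one of them, each intersection $F'\cap E(R)$ is a flat of the projective geometry $R$ (hence of size $(q^{j(F')}-1)/(q-1)$ for some $j(F')\ge 1$), and subtracting $1$ per flat in $\cE$ converts the desired lower bound on $\elem(M\con F)$ into the upper bound $(\star)$. The identity $\elem(M\con F_0)-\elem(M\con F)=1+\sum_{L\ni \bar f}(|L|-2)$ is also correctly derived.

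The fatal problem is that the inequality $(\star\star)$ you reduce to is false. Take $k=1$ and let $M$ be the rank-$3$ restriction of $\PG(2,q^2)$ consisting of a Baer subplane $R\cong\PG(2,q)$ together with one extra point $f$ lying on a Baer secant $\ell$ of $R$. Then $F=\{f\}$ is a rank-$1$ flat disjoint from $E(R)$, $F_0=\emptyset$, $N=M$, $\bar f=f$, and the only line of $N$ through $f$ with more than two points is $\ell\cup\{f\}$, which has $q+2$ points. Thus $\sum_{L\ni f}(|L|-2)=q$, which exceeds $q^{2k-1}-1=q-1$. Nonetheless the lemma holds here with equality: $\elem(M\con F)=1+q^2$, and the right-hand side of the lemma is $(q^2+q+1)-q=q^2+1$. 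The numbers still work because the inductive estimate is not tight at $F_0$ — here $\elem(M\con F_0)=\elem(M)=q^2+q+2$, which is $1$ more than the bound of the $k=0$ case. Your reduction to $(\star\star)$ discards precisely this slack: you replace $\elem(M\con F_0)$ by its worst-case lower bound and then demand a uniform bound on $\sum_L(|L|-2)$, and no amount of bookkeeping can make $(\star\star)$ true because it is simply false. To make an induction of this shape work you would need to carry forward more than the single scalar $\elem(M\con F_0)$ — for example, carry the full inequality $(\star)$ at level $k-1$ and relate the multiset of $j$-values through the map $F''\mapsto\cl_M(F''\cup F)$ from $\cE_0$ to $\cE$ — so that the slack accumulated in one step is available to absorb long lines at the next.
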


\section{Connectivity}

	A matroid $M$ is \emph{weakly round} if there is no pair of sets $A,B$ with union $E(M)$, such that $r_M(A) \le r(M)-1$ and $r_M(B) \le r(M)-2$. This is a variation on \emph{roundness}, a notion equivalent to infinite vertical connectivity introduced by Kung in [\ref{kungroundness}] under the name of \emph{non-splitting}. Our tool for reducing Theorem~\ref{main} to the weakly round case is the following, proved in [\ref{sqf}, Lemma 7.2]. 

	\begin{lemma}\label{weakroundnessreduction}
		There is an integer-valued function $f_{\ref{weakroundnessreduction}}(r,d,\ell)$ so that, for any integers $\ell \ge 2$ and $0 \le d \le r$, and real-valued function $g(n)$ satisfying $g(d) \ge 1$ and $g(n) \ge 2g(n-1)$ for all $n > d$, if $M \in \cU(\ell)$ satisfies $r(M) \ge f_{\ref{weakroundnessreduction}}(r,d,\ell)$ and $\elem(M) > g(r(M))$, then $M$ has a weakly round restriction $N$ such that $r(N) \ge r$ and $\elem(N) > g(r(N))$. 
	\end{lemma}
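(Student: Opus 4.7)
The plan is to construct $N$ iteratively, producing a chain $M_0 = M, M_1, M_2, \ldots$ of restrictions of $M$, each satisfying $\elem(M_i) > g(r(M_i))$, and terminating when the current matroid is weakly round. At step $i$, if $M_i$ is weakly round and $r(M_i) \ge r$, I set $N := M_i$. Otherwise, the failure of weak roundness gives sets $A,B \subseteq E(M_i)$ with $A \cup B = E(M_i)$, $r_{M_i}(A) \le r(M_i)-1$ and $r_{M_i}(B) \le r(M_i)-2$; since $\elem(M_i|A) + \elem(M_i|B) \ge \elem(M_i)$, at least one of these restrictions has at least $\elem(M_i)/2$ points, and I take $M_{i+1}$ to be that denser one.

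The density hypothesis is maintained in a straightforward way: provided $r(M_i) > d$, the doubling condition on $g$ yields
$$\elem(M_{i+1}) \ge \tfrac{1}{2}\elem(M_i) > \tfrac{1}{2}g(r(M_i)) \ge g(r(M_i)-1) \ge g(r(M_{i+1})),$$
so the inductive hypothesis carries to the next step. The iteration must terminate after at most $r(M)$ steps since the rank strictly decreases; call the terminal matroid $N$. This $N$ is automatically weakly round and satisfies $\elem(N) > g(r(N))$.

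The main obstacle — and the reason $f_{\ref{weakroundnessreduction}}$ must depend on $r,d,\ell$ — is controlling $r(N)$ from below. I would address this by tracking the density factor $c_i := \elem(M_i)/g(r(M_i)) \ge 1$. At each iteration step, if $M_{i+1}$ is taken from the $A$-side (rank drop $1$) then $c_{i+1} \ge c_i$, while if it comes from the $B$-side (rank drop $2$) the doubling of $g$ gives $c_{i+1} \ge 2c_i$. Kung's theorem provides an upper bound $c_i \le (\ell+1)^{r(M_i)-1}/g(r(M_i)) \le 2^d((\ell+1)/2)^{r(M_i)-1}/(\ell+1)$ whenever $r(M_i) \ge d$, which bounds the number of rank-drop-$2$ steps as a function of $r,d,\ell$ only. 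For rank-drop-$1$ steps one must also exploit the freedom in choosing $A,B$: whenever the $B$-side is heavier but using it would push the rank below $r$, one can enlarge $B$ to any flat of rank $r(M_i)-1$ containing it, converting the step into a rank-drop-$1$ step with the same density guarantee. Careful bookkeeping yields an explicit upper bound on the total rank drop in terms of $r,d,\ell$.

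The hardest part of the proof will be exactly this quantitative accounting: interleaving rank-drop-$1$ and rank-drop-$2$ iterations, balanced against the doubling of $g$ and Kung's density ceiling, in order to guarantee that the iteration terminates before the rank falls below $r$. Choosing $f_{\ref{weakroundnessreduction}}(r,d,\ell)$ to exceed the resulting rank-drop bound completes the argument.
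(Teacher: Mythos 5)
The paper does not prove this lemma; it cites it directly from [14, Lemma 7.2], so there is no in-paper proof to compare against. Judged on its own terms, your proposal contains a genuine gap in the one place you yourself flag as ``the hardest part.''

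Your iteration and density-maintenance analysis is fine: at each step the denser side has at least half the points, and the doubling hypothesis on $g$ converts a rank drop of $k$ into a density-factor gain of at least $2^{k-1}$, so $c_{i+1}\ge c_i$ for rank-drop-$1$ steps and $c_{i+1}\ge 2c_i$ for rank-drop-$\ge 2$ steps, and Kung's bound does cap $c_i$. But this caps only the \emph{number of rank-drop-$\ge 2$ steps}; it puts no limit at all on the number of rank-drop-$1$ steps, since those leave $c_i$ unchanged. The iteration can therefore consist almost entirely of rank-drop-$1$ steps, each of which strictly lowers the rank, and there is nothing in your accounting that forces the process to hit a weakly round restriction before the rank falls below $r$. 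Concretely, after $m$ steps with final rank $\rho$ the only constraints you have are $c_0\cdot 2^{(R-\rho)-m}\le(\ell+1)^{\rho-1}/g(\rho)$ together with $m\le R-\rho$; since $g(\rho)\ge 2^{\rho-d}$ and $\log_2(\ell+1)>1$ for $\ell\ge 2$, this system is satisfiable for arbitrarily small $\rho$ once $d\ge 2$, so no choice of $f_{\ref{weakroundnessreduction}}(r,d,\ell)$ emerges from it. Your proposed remedy --- enlarging $B$ to a flat of rank $r(M_i)-1$ when the $B$-side is heavy --- does not help: it merely converts a rank-drop-$2$ step into yet another rank-drop-$1$ step, which is the very kind of step you cannot control. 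The sentence ``Careful bookkeeping yields an explicit upper bound on the total rank drop in terms of $r,d,\ell$'' is therefore an unsupported assertion, and I do not believe it can be made true with only the tools you have introduced; some further idea (a rank-sensitive density threshold, a minimality argument with a modified objective, or a sharper use of the structure of the two sides) is needed.

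A minor additional slip: your stated ceiling $c_i\le 2^d\bigl((\ell+1)/2\bigr)^{r(M_i)-1}/(\ell+1)$ is not implied by Kung plus $g(n)\ge 2^{n-d}$; the correct bound is $c_i\le 2^{d-1}\bigl((\ell+1)/2\bigr)^{r(M_i)-1}$, which is larger by a factor of $(\ell+1)/2$. This is harmless to the overall structure but should be corrected.
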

	
	Our next lemma, proved in [\ref{gn2}, Lemma 8.1], allows us to exploit weak roundness by contracting an interesting low-rank restriction onto a projective geometry. 
	
	\begin{lemma}\label{contractrestriction}
There is an integer-valued function $f_{\ref{contractrestriction}}(n,q,t,\ell)$
so that, for any prime power $q$ and integers $n \ge 1,\ell \ge 2$ and $t \ge
0$, if $M \in \cU(\ell)$ is weakly round and has a
$\PG(f_{\ref{contractrestriction}}(n,q,t,\ell)-1,q)$-minor and a restriction $T$ of rank at most $t$, then there is a minor $N$ of $M$ of
rank at least $n$, such that $T$ is a restriction of $N$, and $N$ 
has a $\PG(r(N)-1,q)$-restriction.
	\end{lemma}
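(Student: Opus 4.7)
The plan is to fix a minor $M/C_0 \setminus D_0$ of $M$ carrying a spanning $\PG(s-1,q)$-restriction (so that $r_M(C_0) = r(M) - s$) with $s = f_{\ref{contractrestriction}}(n,q,t,\ell)$ chosen much larger than $n$, and then modify the contraction set $C_0$ so that $T$ becomes a restriction of the modified minor. The required function $f_{\ref{contractrestriction}}$ will be $n$ plus a bounded rank loss coming from the modification step.

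The key step is to find $C \subseteq E(M) \setminus E(T)$ with $r_M(C) = r(M) - s'$ for some $s' \ge n$, such that $C$ is skew to $E(T)$ in $M$ (i.e.\ $r_M(C \cup E(T)) = r_M(C) + r_M(E(T))$) and $M/C$ still has a spanning $\PG(s'-1,q)$-restriction $R$. This is the heart of the argument and the essential use of weak roundness. Starting from $C_0$, for each element of $E(T) \cap C_0$ and for each dependency witnessing a failure of skewness, we perform an exchange argument replacing one element of $C_0$ by another (or relocating it) at the cost of a bounded loss of PG-rank. The flexibility needed for these exchanges comes from the observation that weak roundness of $M$ passes to every contraction minor of $M$ (a short check of the definition): were an exchange impossible, one would obtain a covering of $E(M)$ by two rank-deficient sets, violating weak roundness. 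Since $\elem(T) \le \tfrac{\ell^{t+1}-1}{\ell-1}$ by Theorem~\ref{kung}, the total rank loss incurred across all such exchanges is bounded by a function of $q$, $t$, and $\ell$ alone.

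Once $C$ is obtained, set $D = E(M/C) \setminus (E(R) \cup E(T))$ and let $N = M/C \setminus D$. Then $N$ has ground set $E(R) \cup E(T)$; since $R$ is spanning in $M/C$, it is spanning in $N$, giving $r(N) = s' \ge n$ and exhibiting $R \cong \PG(s'-1,q)$ as a spanning restriction of $N$. The skewness of $C$ and $E(T)$ together with $C \cap E(T) = \emptyset$ imply, by a short submodularity calculation, that $r_{M/C}(X) = r_M(X)$ for every $X \subseteq E(T)$; hence $N|E(T) = M|E(T) = T$, so $T$ is a restriction of $N$. Choosing $s$ large enough to absorb the rank loss from the first step completes the proof.

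The main obstacle is the first step: quantifying the rank lost when adjusting $C_0$ to avoid $E(T)$ and become skew to it. Weak roundness of $M$ (and of all its contractions) is the essential tool, but turning the flexibility it provides into a tight quantitative bound requires a careful exchange analysis, together with Theorem~\ref{kung} to control the combinatorial complexity of $E(T)$.
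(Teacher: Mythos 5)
Your overall plan --- find a contraction set $C$ disjoint from and skew to $E(T)$ such that $M/C$ has a spanning $\PG$-restriction of sufficient rank, then delete everything outside $E(R) \cup E(T)$ --- is the right one, and your observation that weak roundness is preserved under contraction is correct and is indeed the key source of freedom. However, the ``exchange argument'' at the heart of your proof is a genuine gap, not just a missing detail.

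The issue is that when you remove an element $e$ from $C_0$ (or swap it for a different element $e'$), the matroid $(M/(C_0 \setminus \{e\}))|E(R_0)$ is a single-element \emph{lift} of $\PG(s-1,q)$, and a lift of a projective geometry need not itself contain a large projective geometry as a restriction. You are implicitly assuming that each exchange degrades the spanning $\PG$-restriction by only a bounded amount, but this does not follow from anything you have said: you would have to argue that the lift can be repaired into a projective geometry, possibly by further contraction, and that this repair interacts safely with the skewness to $E(T)$ you are simultaneously trying to maintain. This is precisely where the real work of the lemma lies. Weak roundness gives you a nonloop $e'$ outside $\cl(E(R_0)) \cup \cl(E(T))$ (when both flats are sufficiently rank-deficient), and contracting such an $e'$ preserves both the $\PG$-restriction and the restriction $T$; but the ``removal'' half of your exchange, undoing part of $C_0$, has no such guarantee. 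An argument that actually closes this gap would typically first pass to a subset $C_1 \subseteq C_0$ of rank $\ge r_M(C_0) - t$ that is skew to $\cl_M(E(T))$ (using $\sqcap_M(C_0,\cl_M(E(T))) \le t$), accepting that the $\PG$ is now only a minor of $M/C_1$ via a contraction of rank $\le t$, and then reconstruct the spanning restriction from scratch in $M/C_1$ by iterated contraction using weak roundness --- it does not try to patch the original $C_0$ element by element.

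A secondary point: your appeal to Theorem~\ref{kung} to bound the number of exchanges via $\elem(T)$ is misdirected. The relevant quantity is the local connectivity $\sqcap_M(C_0, \cl_M(E(T))) \le r_M(E(T)) \le t$, which bounds the amount of non-skewness; the number of points of $T$ plays no role. Also, you assume at the outset that the given $\PG(f-1,q)$-minor is realized as a \emph{spanning} restriction of a contraction $M/C_0$; this can indeed always be arranged (by absorbing a cobasis of the restriction into the contraction set), but it is an extra step that you should state rather than build silently into your hypotheses.
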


\section{Stacks}

	We now define an obstruction to $\GF(q)$-representability. If $q$ is a prime power, and $h$ and $t$ are nonnegative integers, then a matroid $S$ is a \emph{$(q,h,t)$-stack} if there are pairwise disjoint subsets $F_1, F_2, \dotsc, F_h$ of $E(S)$ such that the union of the $F_i$ is spanning in $F$, and for each $i \in \{1, \dotsc, h\}$, the matroid $(S \con (F_1 \cup \dotsc \cup F_{i-1}))|F_i$ has rank at most $t$ and is not $\GF(q)$-representable. We write $F_i(S)$ for $F_i$. Note that such a stack has rank at most $ht$. When the value of $t$ is unimportant, we refer simply to a \emph{$(q,h)$-stack}.  

	The next three results suggest that stacks are `incompatible' with large projective geometries. First we argue that a matroid obtained from a projective geometry by applying a small `projection' does not contain a large stack:
	
	\begin{lemma}\label{stackinprojection}
		Let $q$ be a prime power and $h$ be a nonnegative integer. If $M$ is a matroid and $X \subseteq E(M)$ satisfies $r_M(X) \le h$ and $\si(M \del X) \cong \PG(r(M)-1,q)$, then $M \con X$ has no $(q,h+1)$-stack restriction. 
	\end{lemma}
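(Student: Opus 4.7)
I would argue by induction on $h$. In the base case $h = 0$, the set $X$ consists only of loops of $M$, so $\si(M \con X) \cong \si(M \del X) \cong \PG(r(M)-1, q)$ is $\GF(q)$-representable; hence every restriction of $M \con X$ is $\GF(q)$-representable, ruling out any $(q, 1)$-stack (a non-$\GF(q)$-representable matroid) as a restriction.

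For the inductive step, assume the lemma for $h - 1$ and suppose for contradiction that $S = (M \con X)|Y$ is a $(q, h+1)$-stack with levels $F_1, \ldots, F_{h+1}$. Restricting $S$ to $F_1 \cup \cdots \cup F_h$ produces a $(q, h)$-stack restriction of $M \con X$, which the inductive hypothesis would already rule out if $r_M(X) \le h - 1$; so I may assume $r_M(X) = h$. The aim is now to produce a pair $(M', X')$ with $r_{M'}(X') = h-1$, $\si(M' \del X') \cong \PG(r(M')-1, q)$, and $M' \con X' = M \con X$, yielding a contradiction with the inductive hypothesis applied to $(M',X')$.

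Two complementary reductions are natural. First, contracting any $Z \subseteq E(M) \del X$ preserves the lemma's hypothesis, since $\si((M \con Z) \del X) = \si((M \del X) \con Z) \cong \PG(r(M)-r_M(Z)-1, q)$ (contracting a rank-$k$ subset in $\PG(r(M)-1,q)$ yields $\PG(r(M)-k-1,q)$); this reduces $r_{M \con Z}(X)$ only when $Z$ shares rank with $X$ (equivalently, when $Z \cap \cl_M(X)$ has positive rank). Second, if some $x \in X$ is parallel in $M$ to an element of $E(M) \del X$, then taking $M' = M \con x$ and $X' = X \setminus \{x\}$ works cleanly: $\si(M' \del X') = \si((M \del X) \con x)$ realizes the standard projective-geometry quotient of $\PG(r(M)-1, q)$ by the parallel class of $x$, which is $\PG(r(M)-2, q)$, and one verifies $r_{M'}(X') = h-1$ and $M' \con X' = M \con X$ directly. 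In this benign case the inductive hypothesis then contradicts the $(q, h)$-sub-stack.

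The main obstacle is the case in which no element of $X$ is parallel in $M$ to any element of $E(M) \del X$. Then $x$ is a new parallel class when added back to $M \del X$, so $\si(M \del (X \setminus \{x\}))$ already fails to be a projective geometry (it contains a line of $q+2$ parallel classes), and the simplification of the subsequent contraction by $x$ is generically not $\PG(r(M)-2,q)$ either. Overcoming this would use the non-$\GF(q)$-representability of the bottom stack level $S|F_1 = (M|(X \cup F_1)) \con X$ to force the closures of $X$ and $F_1$ to meet non-trivially, producing a non-loop element $z \in F_1 \cap \cl_M(X)$; contracting $z$ via the $Z$-contraction reduction above then reduces $r_M(X)$ by 1 while preserving the projective-geometry hypothesis, after which a $(q, h)$-stack restriction of $(M \con z) \con X$ descends from $S$ and contradicts the inductive hypothesis. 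Verifying the required forced incidence from the non-representability of the bottom layer is the technically demanding part of the argument.
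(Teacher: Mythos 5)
Your overall strategy --- induction on $h$, using the non-$\GF(q)$-representability of the bottom stack level to conclude that $F_1$ and $X$ are not skew --- is the same one the paper uses, and the observation that $\sqcap_M(F_1,X) > 0$ (you say ``closures meet non-trivially'') is exactly the right starting point. However, the step you flag as ``technically demanding'' is a genuine gap, not merely a detail to verify. You want a non-loop element $z \in F_1 \cap \cl_M(X)$ (or at least in $\cl_M(X) \setminus X$). The positive local connectivity $\sqcap_M(F_1,X) > 0$ does \emph{not} force any such incidence. Concretely, take $q=2$, $r(M)=3$, $M\setminus X = \PG(2,2)$, $X = \{x\}$ with $x$ placed freely in the rank-$3$ space, and $F_1 = E(M)\setminus X$. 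Then $M|F_1 \cong \PG(2,2)$ is $\GF(2)$-representable, $(M\con x)|F_1 \cong U_{2,7}$ is not, and $\sqcap_M(F_1,\{x\}) = 1 > 0$, yet $\cl_M(\{x\})$ contains no non-loop other than $x$ itself, so $F_1 \cap \cl_M(X) = \emptyset$ and $\cl_M(X)\setminus X$ has rank $0$. Your case analysis (parallel vs.\ not parallel) therefore cannot be closed by producing a single element to contract.

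The fix --- which is what the paper does --- is to contract the \emph{whole} set $F_1$ rather than a single element of it. Since $F_1 \subseteq E(M)\setminus X$, the pair $(M\con F_1, X)$ still satisfies the projective-geometry hypothesis: $\si((M\con F_1)\setminus X) = \si((M\setminus X)\con F_1) \cong \PG(r(M\con F_1)-1,q)$. And $\sqcap_M(F_1,X) > 0$ gives directly that $r_{M\con F_1}(X) = r_M(X) - \sqcap_M(F_1,X) < r_M(X) \le h$, so the inductive hypothesis applies to $(M\con F_1,X)$ and rules out any $(q,h)$-stack restriction of $M\con(X\cup F_1)$. But $(M\con(X\cup F_1))|(E(S)\setminus F_1)$, with levels $F_2,\dots,F_{h+1}$, is exactly such a stack --- contradiction. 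This eliminates both the need for a forced incidence and the parallel/non-parallel case split; there is also no need to assume $r_M(X)=h$ exactly.
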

	\begin{proof}
		The result is clear if $h=0$; suppose that $h > 0$ and that the result holds for smaller $h$. Moreover suppose for a contradiction that $M \con X$ has a $(q,h+1,t)$-stack restriction $S$. Let $F = F_1(S)$. Since $(M \con X)|F$ is not $\GF(q)$-representable but $M|F$ is, it follows that $\sqcap_{M}(F,X) > 0$. Therefore  $r_{M \con F}(X) < r_M(X) \le h$ and $\si(M \con F \del X) \cong \PG(r(M \con F)-1,q)$, so by the inductive hypothesis $M \con (X \cup F)$ has no $(q,h)$-stack restriction. Since $M \con (X \cup F)|(E(S)-F)$ is clearly such a stack, this is a contradiction.  
	\end{proof}
	
	Now we show that a large stack on top of a projective geometry allows us to find a large flat disjoint from the geometry:
	
	\begin{lemma}\label{stackfindprojection}
		Let $q$ be a prime power and $h$ be a nonnegative integer. If $M$ is a matroid with a $\PG(r(M)-1,q)$-restriction $R$ and a $(q,\binom{h+1}{2})$-stack restriction $S$, then $E(S) - E(R)$ contains a rank-$h$ flat of $M$. 
	\end{lemma}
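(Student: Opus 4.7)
The plan is to proceed by induction on $h$. The case $h = 0$ is immediate, since $\binom{1}{2} = 0$ and the empty set is a rank-$0$ flat of $M$ contained in $E(S) - E(R)$. For the inductive step, set $k = \binom{h+1}{2}$ and $k' = \binom{h}{2} = k - h$, let $F_1, \dotsc, F_k$ denote the layers of $S$, and observe that the first $k'$ layers form a $(q, k', t)$-stack restriction $S' = S|(F_1 \cup \dotsb \cup F_{k'})$ of $M$. The inductive hypothesis applied to $(M, R, S')$ will give a rank-$(h-1)$ flat $F$ of $M$ with $F \subseteq E(S') - E(R)$.

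I would then extend $F$ to a rank-$h$ flat of $M$ in $E(S) - E(R)$ using the remaining $h$ layers $G_1, \dotsc, G_h := F_{k'+1}, \dotsc, F_k$. Let $M' = M \con F$; since $F$ is disjoint from $E(R)$ and $E(R)$ spans $M$, the simplification $\si(M'|E(R))$ is isomorphic to $\PG(r(M) - h, q)$, and I fix a set $R_0 \subseteq E(R)$ of parallel-class representatives giving a spanning projective-geometry restriction $M'|R_0$ of $M'$. A rank-$h$ flat of $M$ containing $F$ and contained in $E(S) - E(R)$ corresponds bijectively to a parallel class of $M'$ entirely contained in $(E(S) - F) - E(R)$, so the goal becomes to find such a class.

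Assume for a contradiction that no such parallel class exists, and set $N := M'|(R_0 \cup (E(S) - F))$. After first reducing to the case $E(M) = E(R) \cup E(S)$ (so that no stray elements of $E(M) - E(R) - E(S)$ can enter the $M$-closure of a candidate flat), the contradiction assumption forces every element of $E(S) - F$ to be $M'$-parallel to some element of $R_0$, so $\si(N) \cong \PG(r(N) - 1, q)$. Lemma~\ref{stackinprojection} applied to $N$ with $h = 0$ and $X = \emptyset$ then says that $N$ has no non-$\GF(q)$-representable restriction. I plan to contradict this by using the remaining $h$ layers $G_1, \dotsc, G_h$, each contained in $E(N)$, to construct such a restriction of $N$.

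The two main obstacles I anticipate are, first, justifying the reduction to $E(M) = E(R) \cup E(S)$ in a way that preserves the \emph{rank-$h$ flat of $M$} conclusion (a priori the $M$-closure of a flat found in the restriction could absorb strays outside $E(R) \cup E(S)$), and second, bridging the gap between the original stack property---which gives non-$\GF(q)$-representability of $(M \con (F_1 \cup \dotsb \cup F_{k'} \cup G_1 \cup \dotsb \cup G_{i-1}))|G_i$---and what is needed here, namely non-$\GF(q)$-representability of $(N \con (G_1 \cup \dotsb \cup G_{i-1}))|G_i$. These two restrictions differ by a deletion-versus-contraction of $(F_1 \cup \dotsb \cup F_{k'}) - F$ on a common ambient minor of $M$, and the two operations do not commute with $\GF(q)$-representability; I expect the availability of $h$ candidate layers to permit a pigeonhole or iterative-reduction argument that identifies at least one surviving non-$\GF(q)$-representable layer, yielding the required contradiction and completing the induction.
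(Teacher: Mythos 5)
Your inductive skeleton matches the paper's: peel off $\binom{h}{2}$ layers, find a rank-$(h-1)$ flat $F$ inductively, and derive a contradiction from a ``no good extension'' hypothesis using Lemma~\ref{stackinprojection}. But the execution has a fatal flaw at the step where you claim that $\si(M'|E(R)) \cong \PG(r(M)-h,q)$ for $M' = M\con F$. This is false in general: contracting a flat of a supermatroid of a projective geometry need not yield a projective geometry, because $F$ may not be $\GF(q)$-representable relative to $R$. A concrete example: let $R \cong \PG(2,q)$, extend by a point $e$ placed freely, and contract $e$; then $\si(M \con e)$ is $U_{2,q^2+q+1}$, not $\PG(1,q)$. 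Indeed, the $F_i$ coming from the stack are precisely \emph{designed} to be non-$\GF(q)$-representable, so one should expect the projection by $F$ to distort the geometry. As a result, the hypothesis $\si(N) \cong \PG(r(N)-1,q)$ needed for your application of Lemma~\ref{stackinprojection} with $X = \emptyset$ is unjustified, and the whole contradiction collapses.

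The paper avoids this by applying Lemma~\ref{stackinprojection} to the matroid $M|(E(R) \cup H)$ with $X = H$ of rank $h-1$, where the hypothesis $\si(M|(E(R)\cup H)\del H) \cong \PG(r(M)-1,q)$ is literally what's given (it's just $R$). The conclusion is then only that $(M\con H)|E(R)$ --- and hence, under the ``every point of $M\con H$ is parallel to one of $R$'' assumption, $M\con H$ itself --- has no $(q,h)$-\emph{stack} restriction, not merely no non-$\GF(q)$-representable restriction. This is why $\binom{h+1}{2} = \binom{h}{2} + h$ layers are needed rather than $\binom{h}{2}+1$: one must exhibit a full $(q,h)$-stack in $M\con H$, not just a single bad layer.

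Your second anticipated obstacle (that deletion versus contraction of the consumed layers does not commute with $\GF(q)$-representability) is real, but the paper does not resolve it by pigeonhole or iteration. The fix is a merging trick: the first layer of the new stack in $M\con H$ is taken to be $(E(S_1) - H) \cup F_{\binom{h}{2}+1}$, i.e.\ the union of all consumed layers (minus $H$) with the next unused layer. Then $(M\con H)|\bigl((E(S_1)-H)\cup F_{\binom{h}{2}+1}\bigr)$ has $(M\con E(S_1))|F_{\binom{h}{2}+1}$ as a contraction minor, which is non-$\GF(q)$-representable by the original stack property; the remaining $h-1$ layers $F_{\binom{h}{2}+2},\dotsc,F_{\binom{h+1}{2}}$ behave identically in $M\con H$ and in $M$ since $H \subseteq E(S_1)$ is absorbed into the first layer. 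So the new stack is obtained essentially for free, with no need to certify representability of intermediate minors.
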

	\begin{proof}
		If $h = 0$, then there is nothing to show; suppose that $h > 0$ and that the result holds for smaller $h$. Let $S$ be a $(q,\binom{h+1}{2})$-stack restriction of $M$ and let $F_i = F_i(S)$ for each $i \in \{1, \dotsc, \binom{h+1}{2}\}$. Let $S_1 = S| F_1 \cup \dotsc \cup F_{\binom{h}{2}}$. Clearly $S_1$ is a $(q,\binom{h}{2})$-stack, so inductively $E(S_1)-E(R)$ contains a rank-$(h-1)$ flat $H$ of $M$. 
		
		Note that $(M \con H)|E(R)$ has no loops. If $M \con H$ has a nonloop $e$ that is not parallel to an element of $R$, then $\cl_M(H \cup \{e\})$ is a rank-$h$ flat of $M$ contained in $E(S)-E(R)$, and we are done. Therefore we may assume that $\si(M \con H) \cong \si((M \con H)|E(R))$, and so by Lemma~\ref{stackinprojection} applied to the matroid $M|(E(R) \cup H)$, we know that $M \con H$ has no $(q,h)$-stack restriction. However the sets $(E(S_1)-H) \cup F_{\binom{h}{2}+1}, F_{\binom{h}{2}+2}, \dotsc, F_{\binom{h+1}{2}}$ clearly give rise to such a stack. This is a contradiction. 
		
	\end{proof}
	
	Finally we show that a large stack restriction, together with a very large projective geometry minor, gives a projective geometry minor over a larger field:
	
	\begin{lemma}\label{stackwin}
		There are integer-valued functions $f_{\ref{stackwin}}(n,q,t,\ell)$ and $h_{\ref{stackwin}}(n,q,\ell)$ so that, for each prime power $q$ and any integers $\ell \ge 2$, $n \ge 2$ and $t \ge 0$, if $M \in \cU(\ell)$ is weakly round with a $\PG(f_{\ref{stackwin}}(n,q,t,\ell)-1,q)$-minor and a $(q,h_{\ref{stackwin}}(n,q,\ell),t)$-stack restriction, then $M$ has a $\PG(n-1,q')$-minor for some $q' > q$. 
	\end{lemma}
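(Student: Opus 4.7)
The plan is to use Lemmas~\ref{contractrestriction}, \ref{stackfindprojection}, and \ref{projectiondensity} in sequence to manufacture from $M$ a minor whose base-$q$ density is large enough for Theorem~\ref{gk} (applied with $\gamma = q$) to yield the desired $\PG(n-1,q')$-minor with $q' > q$. The fact that $\gamma = q$ is allowed in Theorem~\ref{gk} is crucial: we do not need to inflate the density base, only to push the constant past $\alpha_{\ref{gk}}(n,q,\ell)$.

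Choose $h' = h'(n,q,\ell)$ to be the least integer with $q^{h'} \ge 2(q-1)\alpha_{\ref{gk}}(n,q,\ell)$, and set
\[
h_{\ref{stackwin}}(n,q,\ell) := \binom{h'+1}{2}, \qquad f_{\ref{stackwin}}(n,q,t,\ell) := f_{\ref{contractrestriction}}\bigl(2h'+1,\, q,\, h_{\ref{stackwin}}(n,q,\ell)\cdot t,\, \ell\bigr).
\]
The given $(q,h_{\ref{stackwin}}(n,q,\ell),t)$-stack restriction $S$ of $M$ has rank at most $h_{\ref{stackwin}}(n,q,\ell)\cdot t$, so Lemma~\ref{contractrestriction} delivers a minor $N$ of $M$ with $r(N) \ge 2h'+1$, such that $S$ is still a restriction of $N$ and $N$ has a $\PG(r(N)-1,q)$-restriction $R$. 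Since $h_{\ref{stackwin}}(n,q,\ell) = \binom{h'+1}{2}$, Lemma~\ref{stackfindprojection} now supplies a rank-$h'$ flat $F$ of $N$ contained in $E(S)\setminus E(R)$.

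Finally, Lemma~\ref{projectiondensity} applied to $N$ and $F$ gives, with $r' := r(N \con F) = r(N) - h'$,
\[
\elem(N \con F) \ge \tgrf{q}{h'}{r'}.
\]
The key step is a direct calculation: since $r' \ge h'+1$, the ratio of the subtracted term to the leading term is at most $q^{h'+1-r'}/(q+1) \le 1/(q+1) \le 1/3$, so the negative term absorbs at most half of the leading term and one obtains $\elem(N \con F) \ge \tfrac{q^{h'}}{2(q-1)} q^{r'}$. By the choice of $h'$, the right-hand side is at least $\alpha_{\ref{gk}}(n,q,\ell)\, q^{r'}$, so Theorem~\ref{gk} with $\gamma = q$ produces a $\PG(n-1,q')$-minor of $N \con F$, hence of $M$, for some $q' > q$.

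The only substantive obstacle is this density bookkeeping. It succeeds precisely because the rank threshold $r(N) \ge 2h'+1$ imposed via Lemma~\ref{contractrestriction} forces $r' \ge h'+1$, which is exactly enough to keep the leading $q^{r'+h'}/(q-1)$ term dominating the $q^{2h'+1}/(q^2-1)$ correction by a constant factor independent of $r'$; this leaves a clean exponential lower bound whose implicit multiplicative constant can be made to beat $\alpha_{\ref{gk}}(n,q,\ell)$ just by taking $h'$ large.
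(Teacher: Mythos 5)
Your proposal is correct and follows essentially the same route as the paper: contract onto a projective geometry via Lemma~\ref{contractrestriction} while keeping the stack, find a rank-$h'$ flat disjoint from the geometry via Lemma~\ref{stackfindprojection}, bound $\elem(N\con F)$ below via Lemma~\ref{projectiondensity}, and invoke Theorem~\ref{gk} with $\gamma=q$; the paper just leaves the choice of $h'$ and the rank threshold implicit where you work out $q^{h'}\ge 2(q-1)\alpha_{\ref{gk}}(n,q,\ell)$ and $r(N)\ge 2h'+1$ explicitly. One small caveat: you should take $h'$ to be the least \emph{positive} integer satisfying your inequality (the paper explicitly requires $h'>0$), since $h'=0$ would make $h_{\ref{stackwin}}=0$ and collapse the stack hypothesis; also the phrase ``absorbs at most half'' is slightly loose relative to the bound $\elem(N\con F)\ge q^{r'+h'}/(2(q-1))$ you state, but the $1/3$ ratio you actually establish does give that bound once $q^{r'+h'}\ge 4$, which holds.
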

	\begin{proof}
		Let $q$ be a prime power and $\ell \ge 2$, $n \ge 2$ and $t \ge 0$ be integers. Let $\alpha = \alpha_{\ref{gk}}(n,q,\ell)$, and let $h' > 0$ and $r \ge 0$ be integers so that $\frac{q^{r'+h'}-1}{q-1} - q\frac{q^{2h'}-1}{q^2-1} > \alpha q^{r'}$ for all $r' \ge r$. Set $h_{\ref{stackwin}}(n,q,\ell) = h = \binom{h'+1}{2}$, and $f_{\ref{stackwin}}(n,q,t,\ell) = f_{\ref{contractrestriction}}(r+h',q,th,\ell)$. 
		
		Let $M \in \cU(\ell)$ be weakly round with a $\PG(f_{\ref{stackwin}}(n,q,t,\ell)-1,q)$-minor and an $(q,h,t)$-stack restriction $S$. We have $r(S) \le th$; by Lemma~\ref{contractrestriction} there is a minor $N$ of $M$, of rank at least $r+h'$, with a $\PG(r(N)-1,q)$-restriction $R$, and $S$ as a restriction. By Lemma~\ref{stackfindprojection}, $E(S)-E(R)$ contains a rank-$h'$ flat $F$ of $M$. Now $r(M \con F) \ge r$; the lemma follows from Lemma~\ref{projectiondensity}, Theorem~\ref{gk}, and the definition of $h'$. 
	\end{proof}
	
\section{Lifting}
	
	The following is a restatement of Theorem~\ref{dhjnice}:

	\begin{theorem}\label{dhj}
		There is an integer-valued function $f_{\ref{dhj}}(n,q,\beta)$ so that, if $\beta > 0$ is a real number, $q$ is a prime power, and $M$ is a $\GF(q)$-representable matroid satisfying $\elem(M) \ge \beta q^{r(M)}$ and $r(M) \ge f_{\ref{dhj}}(n,q,\beta)$, then $M$ has an $AG(n-1,q)$-restriction. 
	\end{theorem}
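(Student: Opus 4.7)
The plan is to recognise Theorem~\ref{dhj} as a matroid-theoretic restatement of the multidimensional density Hales-Jewett theorem of Furstenberg and Katznelson [\ref{fk2}] (see also [\ref{dkt},\ref{polymath}]) and to give the explicit translation. The key dictionary is that a simple $\GF(q)$-representable matroid of rank $r$ embeds in $\PG(r-1,q)$; deleting a hyperplane from $\PG(r-1,q)$ yields $\AG(r-1,q)$, whose ground set may be identified with $\GF(q)^{r-1} \cong [q]^{r-1}$; and a combinatorial $(n-1)$-subspace of $[q]^{r-1}$ corresponds precisely to an $\AG(n-1,q)$-restriction.

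In detail: replace $M$ by $\si(M)$ and view $E(M)$ as a subset $P$ of the point set of $\PG(r-1,q)$ with $|P| = \elem(M) \ge \beta q^r$, where $r = r(M)$. For a uniformly random hyperplane $H$ of $\PG(r-1,q)$, each point misses $H$ with probability $1 - \tfrac{q^{r-1}-1}{q^r-1} \ge \tfrac{q-1}{q}$, so the expected size of $P \setminus H$ is at least $\beta(q-1)q^{r-1}$. Fix a hyperplane $H$ attaining this bound, choose an affine frame to identify $\PG(r-1,q) \setminus H$ with $\GF(q)^{r-1}$, and let $S$ be the image of $P \setminus H$ in $[q]^{r-1}$, so that $|S| \ge \beta' q^{r-1}$ with $\beta' = \beta(q-1)$. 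The multidimensional density Hales-Jewett theorem now supplies an integer $N(n-1,q,\beta')$ such that, whenever $r-1 \ge N(n-1,q,\beta')$, the set $S$ contains a combinatorial $(n-1)$-subspace $T$. Define $f_{\ref{dhj}}(n,q,\beta) := N(n-1,q,\beta(q-1))+1$.

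It remains to check that $T$ yields an $\AG(n-1,q)$-restriction. By definition $T$ is the image of an affine map $\GF(q)^{n-1} \to \GF(q)^{r-1}$ whose direction vectors are the indicator vectors of $n-1$ pairwise disjoint nonempty sets of coordinates; these are linearly independent over $\GF(q)$, so $T$ is an affine $(n-1)$-flat of $\GF(q)^{r-1}$ with exactly $q^{n-1}$ points. Transporting back into $\PG(r-1,q)$, the set $T$ is the complement of a hyperplane inside a rank-$n$ projective flat, which as a matroid restriction of $\si(M)$ (and hence of $M$) is isomorphic to $\AG(n-1,q)$. There is no substantive obstacle once the setup is in place: the combinatorial theorem is used entirely as a black box, and the only point requiring modest care is the chain of identifications $\PG(r-1,q) \supseteq \PG(r-1,q) \setminus H \cong \AG(r-1,q) \cong \GF(q)^{r-1}$ and the verification that combinatorial subspaces map to affine flats under it.
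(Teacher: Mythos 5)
Your proof is correct. In the paper, Theorem~\ref{dhj} is not proved at all: it is explicitly described as ``a restatement of Theorem~\ref{dhjnice},'' which is itself attributed to Furstenberg and Katznelson [\ref{fk}] and left as a citation. Your proposal supplies the translation the paper leaves implicit, routing instead through the multidimensional density Hales--Jewett theorem [\ref{fk2}]: pass to $\si(M)$ inside $\PG(r-1,q)$, use an averaging argument over hyperplanes to retain density at least $\beta(q-1)$ in the affine part $\GF(q)^{r-1}\cong[q]^{r-1}$, invoke multidimensional DHJ to obtain a combinatorial $(n-1)$-subspace, and observe that under the identification $[q]\cong\GF(q)$ a combinatorial $(n-1)$-subspace (disjoint nonempty wildcard sets giving linearly independent indicator vectors) is exactly an affine $(n-1)$-flat, hence an $\AG(n-1,q)$-restriction. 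The arithmetic is right ($\Pr[p\notin H]=\tfrac{q^{r-1}(q-1)}{q^r-1}\ge\tfrac{q-1}{q}$, and $\beta(q-1)<1$ follows from Kung's bound, so the density hypothesis is nonvacuous), and the final step correctly notes that the combinatorial subspace has exactly $q^{n-1}$ points and fills its affine span. This is the same idea as the paper's --- Furstenberg--Katznelson used as a black box --- but you choose the $[q]^m$ formulation from [\ref{fk2}] and do the dictionary work, whereas the paper relies on the vector-space formulation already present in [\ref{fk}]; both are legitimate, and yours is the more self-contained account.
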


	This next lemma uses the above to show that a bounded lift of a huge affine geometry itself contains a large affine geometry. The proof does not use the full strength of ~\ref{dhj}; the lemma would also follow from the much weaker `colouring' Hales-Jewett Theorem [\ref{hj}]. 
	\begin{lemma}\label{liftag}
		There is an integer-valued function $f_{\ref{liftag}}(n,q,\ell,t)$ so that, if $\ell \ge 2$ and $n \ge 2$ and $t \ge 0$ are integers, $q$ is a prime power, $M \in \cU(\ell)$ and $C \subseteq E(M)$ satisfy $r_M(C) \le t$, and $M \con C$ has an $\AG(f_{\ref{liftag}}(n,q,\ell,t)-1,q)$-restriction, then $M$ has an $\AG(n-1,q)$-restriction. 
	\end{lemma}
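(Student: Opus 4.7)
We plan to prove this by induction on $t=r_M(C)$. The base case $t=0$ is immediate: $C$ then lies in $\cl_M(\emptyset)$, so the simplifications of $M$ and $M\con C$ coincide and the given $\AG$-restriction descends to $M$. For the inductive step, choose $c\in C$ with $r_M(C\setminus\{c\})=t-1$ and set $M'=M\con c$, $C'=C\setminus\{c\}$; then $r_{M'}(C')\le t-1$ and $M'\con C'=M\con C$ still has the given affine restriction. Applying the inductive hypothesis to $(M',C')$ with an appropriately enlarged input rank then reduces the problem to the case $t=1$.

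Henceforth assume $C=\{c\}$ for a non-loop $c\in E(M)$, and let $A\cong\AG(N-1,q)$ be the given affine restriction of $M\con c$. For each $e\in E(A)$ the rank-$2$ flat $L_e:=\cl_M(\{c,e\})$ has at most $\ell+1$ parallel classes by Theorem~\ref{kung}, so the marked matroid $(M|L_e,c)$ realizes one of only finitely many isomorphism types; more generally, for each bounded integer $k$ and each $k$-element subset $S\subseteq E(A)$, the marked matroid $(M|\cl_M(\{c\}\cup S),c)$ has bounded rank and, by Theorem~\ref{kung}, bounded size, so it realizes one of only finitely many isomorphism types.

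The idea is to iterate the multidimensional Hales-Jewett theorem. First colour each element $e\in E(A)$ by the type of $(M|L_e,c)$ and pass to a monochromatic sub-$\AG$-restriction $A_1\subseteq A$ of large rank; then colour each affine triangle of $A_1$ (or, for $q=2$, each affine plane) by the type of its marked restriction, and pass to a further monochromatic sub-$\AG$-restriction $A_2\subseteq A_1$. In the favourable case that every triangle $T\subseteq E(A_2)$ satisfies $r_M(T)=r_{A_2}(T)=2$, a line-closure argument --- using that $\AG(n-1,q)$ is determined by its rank-$2$ flats for $q\ge 3$ and by its rank-$3$ flats for $q=2$ --- shows $M|E(A_2)\cong A_2$, which yields the desired affine restriction once $r(A_2)\ge n$.

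The main obstacle will be the unfavourable case, in which every triangle of $A_2$ is independent in $M$. There $c\in\cl_M(T)$ for every triangle $T$, and a short computation gives $M|(L\cup\{c\})\cong U_{3,q+1}$ for every affine line $L$ of $A_2$; contracting any element of $L$ yields a $U_{2,q}$-minor, forcing $q\le\ell+1$, and applying Theorem~\ref{kung} to $M|E(A_2)$ (of rank $r(A_2)+1$ on $q^{r(A_2)-1}$ elements) further forces $q\le\ell$ once $r(A_2)$ is large. I plan to eliminate this case by analysing how two parallel affine lines of $A_2$ jointly interact with $c$: by submodularity the rank-$3$ flats $\cl_M(L_i\cup\{c\})$ must intersect in a rank-$\ge 2$ flat through $c$, which forces additional elements of $M$ to be present outside $L_1\cup L_2\cup\{c\}$, and a further Hales-Jewett pass on pairs of parallel lines combined with an accounting of these forced extra elements should produce either a $U_{2,\ell+2}$-minor or a density contradiction with Theorem~\ref{kung}. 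Making this last structural step work uniformly in the relative sizes of $q$ and $\ell$ is likely to be the subtlest part of the proof.
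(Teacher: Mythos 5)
Your approach is genuinely different from the paper's and, as you yourself acknowledge, contains a real gap. The paper sidesteps all structural analysis of how $C$ interacts with the affine geometry $R \cong \AG(m-1,q)$ in $M/C$: reducing to $E(M)=E(R)\cup C$ with $C$ independent, it contracts a set $X$ of $m-1$ elements spanning a hyperplane of $R$, so that $M/X$ has rank $t+1$ and hence at most $(\ell+1)^t$ points by Kung's bound. Since only $q^{m-2}$ of the $q^{m-1}$ elements of $R$ lie in $\cl_{M/C}(X)$, a pigeonhole argument produces a single rank-$1$ flat $P$ of $M/X$ not spanned by $C$ with $|P|\ge q^{m-2}(\ell+1)^{-t}$. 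A short rank computation shows $P$ is skew to $C$ in $M$, so $M|P=(M/C)|P$, a $\GF(q)$-representable matroid of rank at most $m$ and density at least $(q^2(\ell+1)^t)^{-1}q^{r(M|P)}$; one application of density Hales--Jewett (the paper notes colouring Hales--Jewett would also do) finishes.

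Your proposal instead tries to control the structure of $M$ near $c$ directly by iterating colouring Hales--Jewett on local isomorphism types. The reduction to $t=1$ by induction is fine. The crux is the ``unfavourable case'' (every triangle of $A_2$, or affine plane when $q=2$, spans $c$ in $M$), which is not resolved: the bounds $q\le\ell+1$ and then $q\le\ell$ you extract are not contradictions --- $q\le\ell$ is exactly the generic situation for base-$q$ exponentially dense subclasses of $\cU(\ell)$ --- and the parallel-lines analysis is only a sketch whose correctness you yourself flag as uncertain. There is also a smaller gap in the favourable case: preserving all triangles of $A_2$ makes the lines of $A_2$ flats of $M|E(A_2)$, but it does not force $M|E(A_2)\cong A_2$, since $c$ can lie in $\cl_M(E(A_2))$ without lying in the closure of any triangle (e.g.\ in the closure of a $4$-element circuit of $A_2$ none of whose $3$-subsets is dependent), in which case $M|E(A_2)$ has rank $r(A_2)+1$ and a further argument is needed. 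The paper's find-a-large-skew-piece trick neatly avoids both difficulties at once.
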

	\begin{proof}
		Let $\ell \ge 2$, $n \ge 2$ and $t \ge 0$ be integers, and $q$ be a prime power. Let $d$ be an integer large enough so that $(\ell+1)^{-t} > \frac{q^{2-d}}{q-1}$, and let $m = f_{\ref{dhj}}(n,q,(q^2 (\ell+1)^t)^{-1}) + d$. Set $f_{\ref{liftag}}(n,q,\ell,t) = m$. 
	
		Let $M \in \cU(\ell)$, and let $C \subseteq E(M)$ be a set so that $r_M(C) \le t$ and $M \con C$ has an $\AG(m-1,q)$-restriction $R$. We may assume that $C$ is independent and that $E(M) = E(R) \cup C$, so $M$ is simple and $r(M) = m + |C|$. Let $B$ be a basis for $M$ containing $C$, and let $e \in B - C$. Let $X = B - (C \cup \{e\})$. Now $\cl_{M \con C}(X)$ is a hyperplane of $R$, so $|\cl_{M \con C}(X)| = q^{m-2}$ and there are at least $q^{m-1} - q^{m-2} \ge q^{m-2}$ elements of $M$ not spanned by $X \cup C$. Each such element lies in a point of $M \con X$ and is not spanned by $C$ in $M \con X$. Moreover, $r(M \con X) = t+1$, so $M \con X$ has at most $(\ell+1)^t$ points; there is thus a point $P$ of $M \con X$, not spanned by $C$, with $|P| \ge (\ell+1)^{-t}q^{m-2}$. 
		
		Now $P \subseteq E(R)$, so the matroid $(M \con C)|P$ is $\GF(q)$-representable and has rank at most $m$, and $\elem((M \con C)|P) \ge (\ell+1)^{-t}q^{m-2} > \frac{q^{m-d}-1}{q-1}$, so $r((M \con C)|P) \ge m - d$. Furthermore, $\elem((M\con C)|P) \ge (q^2(\ell+1)^t)^{-1}q^m \ge (q^2(\ell+1)^t)^{-1} q^{r((M \con C)|P)}$, so by Theorem~\ref{dhj} and the definition of $m$, the matroid $(M \con C)|P$ has an $\AG(n-1,q)$-restriction. However, $P$ is skew to $C$ in $M$ by construction, so $(M \con C)|P = M|P$, and therefore $M$ also has an $\AG(n-1,q)$-restriction, as required.
	\end{proof}
	
\section{The Main Result}

	Since, for any base-$q$ exponentially dense minor-closed class $\cM$, there is some $\ell \ge 2$ such that $\cM \subseteq \cU(\ell)$ and there is some $s$ such that $\PG(s,q') \notin \cM$ for all $q' > q$, this next theorem easily implies Theorem~\ref{main}. 

	\begin{theorem}\label{firsthalf}
		There is an integer-valued function $f_{\ref{firsthalf}}(n,q,\ell,\beta)$ so that if $\beta > 0$ is a real number, $n \ge 2$ and $\ell \ge 2$ are integers, $q$ is a prime power, and $M \in \cU(\ell)$ satisfies $r(M) \ge f_{\ref{firsthalf}}(n,q,\ell,\beta)$ and $\elem(M) \ge \beta q^{r(M)}$, then $M$ has either an $\AG(n-1,q)$-restriction or a $\PG(n-1,q')$-minor for some $q' > q$. 
	\end{theorem}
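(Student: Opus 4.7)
Assume $M$ has no $\PG(n-1,q')$-minor for any $q' > q$ (otherwise the second alternative of the theorem holds); we then hunt for an $\AG(n-1,q)$-restriction. First, apply Lemma~\ref{weakroundnessreduction} with $g(r) = \beta q^r$ (whose doubling condition holds since $q \ge 2$) to reduce to a weakly round $M$ of arbitrarily large rank with $\elem(M) > \beta q^{r(M)}$. Then, by Theorem~\ref{gk} applied with $\gamma$ slightly below $q$, the density assumption yields a $\PG(s-1,q')$-minor of $M$ for some $q' > \gamma$, where $s$ can be made as large as needed; our hypothesis forces $q' = q$, so $M$ has an arbitrarily large $\PG(s-1,q)$-minor.

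Set $h = h_{\ref{stackwin}}(n,q,\ell)$, and choose $t$ large enough that every non-$\GF(q)$-representable matroid in $\cU(\ell)$ has a non-$\GF(q)$-representable restriction of rank at most $t$ (available from the bounded excluded-minor theorem for $\GF(q)$-representability within $\cU(\ell)$ developed in the machinery of [\ref{thesis}]). Taking $s \ge f_{\ref{stackwin}}(n,q,t,\ell)$, Lemma~\ref{stackwin} combined with the no-large-$\PG$-minor hypothesis rules out any $(q,h,t)$-stack restriction in $M$. I then build a stack restriction $S$ of $M$ greedily: start with $S = \emptyset$, and while $M \con E(S)$ is not $\GF(q)$-representable, extend $S$ by a non-$\GF(q)$-representable restriction of $M \con E(S)$ of rank at most $t$ (which exists by the choice of $t$, since $M \con E(S) \in \cU(\ell)$). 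The loop terminates after at most $h - 1$ extensions, at which point $M \con E(S)$ is $\GF(q)$-representable and $r_M(E(S)) \le (h-1)t$.

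By Corollary~\ref{kungrel}, $\elem(M \con E(S)) \ge (\ell+1)^{-(h-1)t}\beta \cdot q^{r(M \con E(S))}$, preserving an exponential base-$q$ density bound. Applying Theorem~\ref{dhj} to the $\GF(q)$-representable matroid $M \con E(S)$ with parameter $m = f_{\ref{liftag}}(n,q,\ell,(h-1)t)$ produces an $\AG(m-1,q)$-restriction of $M \con E(S)$, provided $r(M \con E(S))$ is sufficiently large (arranged by taking $r(M)$ large at the outset); Lemma~\ref{liftag} then lifts this through the contraction by $E(S)$ to yield the desired $\AG(n-1,q)$-restriction of $M$. The main obstacle is justifying the choice of $t$ in the previous paragraph: that non-$\GF(q)$-representability in $\cU(\ell)$ is witnessed by a bounded-rank non-$\GF(q)$-representable \emph{restriction} rather than merely a minor. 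If the available excluded-minor input only guarantees a bounded-rank non-$\GF(q)$-representable minor $X = (M \con E(S)) \con C \setminus D$, one can instead extend the stack by the new level $F = C \cup E(X)$, since $(M \con E(S))|F$ has $X$ as a minor and is therefore non-$\GF(q)$-representable; bounding $r_{M \con E(S)}(C)$ to keep the stack level of bounded rank may then require additional input from weak roundness via Lemma~\ref{contractrestriction}.
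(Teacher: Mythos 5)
Your proposal shares the overall architecture with the paper (weak roundness reduction, stacks, Lemma~\ref{stackwin}, Theorem~\ref{dhj}, Lemma~\ref{liftag}), but hinges on a step that is not available and that you yourself flag as the ``main obstacle.'' You want to choose $t$ so that every non-$\GF(q)$-representable matroid in $\cU(\ell)$ has a non-$\GF(q)$-representable \emph{restriction} of rank at most $t$; no such boundedness result is among the cited tools, and it is very much in doubt. Even a hypothetical bounded-rank excluded-minor theorem would only yield a bounded-rank non-representable \emph{minor} $X = N \con C \del D$, and the restriction $N|(C \cup E(X))$ you propose as the new stack level has rank roughly $r_N(C) + r(X)$, where $r_N(C)$ is a priori unbounded; Lemma~\ref{contractrestriction} does nothing to control this quantity (it is about locating $\PG$-restrictions, not about compressing contraction sets). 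So your greedy stack-building loop may fail to make progress: you have no mechanism forcing a bounded-rank stack level to exist when $M \con E(S)$ is non-representable. This is a genuine gap, not a technicality.

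The paper sidesteps this entirely by reversing the quantification. Rather than trying to certify that $\si(M_1 \con E(S))$ is $\GF(q)$-representable, it fixes $k$ \emph{maximal} with a $(q,k,t_k)$-stack restriction $S$ (for a carefully chosen nondecreasing sequence $t_0 \le \dotsb \le t_h$), and in the case $k < h$ works inside $M_0 = \si(M_1 \con E(S))$. It then finds, by a majority argument over rank-$t_{k+1}$ flats containing a fixed rank-$(t_{k+1}-1)$ flat $F_0$, a single flat $F$ of rank exactly $t_{k+1}$ that is dense enough for Theorem~\ref{dhj}; the step you were missing --- that $M_0|F$ is $\GF(q)$-representable --- comes for free from the maximality of $k$, since otherwise appending $F$ to $S$ would produce a $(q,k{+}1,t_{k+1})$-stack. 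In other words, the paper applies density Hales-Jewett only to a carefully located bounded-rank flat of $M_0$, not to $M_0$ itself, and never needs a bounded-rank witness of non-representability at all. The cost is the extra bookkeeping with the tower $t_0, \dotsc, t_h$ (each $t_{k+1}$ large enough to survive the density loss of contracting a rank-$kt_k$ stack) and the majority argument, but that machinery is what makes the proof go through where the greedy approach stalls.
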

	\begin{proof}
		Let $\beta > 0$ be a real number, $q$ be a prime power, and $\ell \ge 2$ and $n \ge 2$ be integers. Let $\alpha = \alpha_{\ref{gk}}(n,q,\ell)$, $h = h_{\ref{stackwin}}(n,q,\ell)$. Set $0 = t_0, t_1, \dotsc, t_h$ to be a nondecreasing sequence of integers such that \[t_{k+1} \ge f_{\ref{dhj}}(f_{\ref{liftag}}(n,q,\ell,kt_k),q,\beta((\ell+1)^{kt_k}q\alpha)^{-1})\] for each $k \in \{0, \dotsc, h-1\}$. Let $m = \max(n,f_{\ref{stackwin}}(n,q,\ell,t_h))$, and let $r_1$ be an integer large enough so that $r_1 \ge (h+1)t_h$, $q^{(h+1)t_h - r_1 - 1} \le \alpha$, and $\beta q^{r} \ge \alpha_{\ref{gk}}(m,q-\tfrac{1}{2},\ell)(q-\tfrac{1}{2})^{r}$ for all $r \ge r_1$. Let $d$ be an integer such that $\beta q^d \ge 1$, and let $r_2 = f_{\ref{weakroundnessreduction}}(r_1,d,\ell)$. 
		
		Let $M_2 \in \cU(\ell)$ satisfy $r(M_2) \ge r_2$ and $\elem(M_2) \ge \beta q^{r(M_2)}$; we will show that $M_2$ has either a $\PG(n-1,q')$-minor for some $q' > q$, or an $\AG(n-1,q)$-restriction. The function $g(r) = \beta q^{r}$ satisfies $g(d) \ge 1$ and $g(r) \ge 2g(r-1)$ for all $r > d$, so by Lemma~\ref{weakroundnessreduction} $M_2$ has a weakly round restriction $M_1$ such that $r(M_1) \ge r_1$ and $\elem(M_1) \ge \beta q^{r(M_1)}$.
				
		 Let $k \ge 0$ be maximal such that $k \le h$ and $M_1$ has a $(q,k,t_k)$-stack restriction $S$. We split into cases depending on whether $k = h$:
		
		\textbf{Case 1: $k = h$}.
		
		Note that $\elem(M_1) \ge \beta q^{r(M_1)} \ge \alpha_{\ref{gk}}(m,q-\tfrac{1}{2},\ell)(q-\tfrac{1}{2})^{r(M_1)}$, so $M_1$ has a $\PG(m-1,q')$-minor for some $q' > q - \tfrac{1}{2}$. If $q' > q$, then we have the first outcome, since $m > n$. Therefore we may assume that $M_1$ has a $\PG(m-1,q)$-minor. $M_1$ also has a $(q,h,t_h)$-stack restriction, and the first outcome now follows from Lemma~\ref{stackwin} and the definitions of $m$ and $h$.
		
		\textbf{Case 2: $k < h$}. 
		
		Let $M_0 = \si(M_1 \con E(S))$; note that $r(M_0) \ge r(M_1) - kt_k$, and therefore that $|M_0| \ge (\ell+1)^{-kt_k}|M_1| \ge (\ell+1)^{-kt_k}\beta q^{r(M_0)}$. Let $F_0$ be a rank-$(t_{k+1}-1)$ flat of $M_0$, and consider the matroid $M_0 \con F_0$. If $\elem(M_0 \con F_0) \ge \alpha q^{r(M_0 \con F_0)}$, then we have the first outcome by Theorem~\ref{gk}, so we may assume that $\elem(M_0 \con F_0) \le \alpha q^{r(M_0 \con F_0)} = \alpha q^{r(M_0) - t_{k+1} + 1}$. Let $\cF$ be the collection of rank-$t_{k+1}$ flats of $M_0$ containing $F_0$. By a majority argument, there is some $F \in \cF$ satisfying 
		\begin{align*}
			|F-F_0| &\ge |\cF|^{-1}|M_0 \del F_0| \\
					&\ge \elem(M_0 \con F_0)^{-1} ((\ell+1)^{-kt_k}\beta q^{r(M_0)} - |F_0|) \\
					&\ge \alpha^{-1}q^{-r(M_1)+t_{k+1}-1}((\ell+1)^{-kt_k}\beta q^{r(M_0)}- |F_0|). 
		\end{align*}
		Since $\alpha^{-1}q^{-r(M_1) + r(S) + t_{k+1}-1} \le \alpha^{-1}q^{-r_1 + (h+1)t_h-1} \le 1$, this gives $|F| \ge \alpha^{-1}q^{-r(M_0) + t_{k+1} - 1} (\ell+1)^{-kt_k}\beta q^{r(M_0)} = \beta((\ell+1)^{kt_k} q\alpha)^{-1}q^{r(M_0|F)}$. By maximality of $k$, we know that $M_0|F$ is $\GF(q)$-representable, and $r(M_0|F) = t_{k+1} \ge f_{\ref{dhj}}(f_{\ref{liftag}}(n,q,\ell,kt_k),q,\beta((\ell+1)^{kt_k}q\alpha)^{-1})$, so $M_0|F$ has an $\AG(f_{\ref{liftag}}(n,q,\ell,kt_k)-1,q)$-restriction by Theorem~\ref{dhj}. Now $M_0 = \si(M_1 \con E(S))$ and $r(S) \le kt_k$, so by Lemma~\ref{liftag}, $M_1$ has an $\AG(n-1,q)$-restriction, and so does $M_2$.

	\end{proof}

\section*{References}
\newcounter{refs}
\begin{list}{[\arabic{refs}]}
{\usecounter{refs}\setlength{\leftmargin}{10mm}\setlength{\itemsep}{0mm}}

\item\label{dkt}
P. Dodos, V. Kanellopoulos, K. Tyros, 
A simple proof of the density Hales-Jewett theorem, 
arXiv:1209.4986v1 [math.CO], (2012) 1-11.

\item\label{es}
P. Erd\H os, A.H. Stone,
On the structure of linear graphs,
Bull. Amer. Math. Soc. 52 (1946), 1087--1091.

\item\label{fk}
H. Furstenberg, Y. Katznelson,
IP-sets, Szemer\'edi's Theorem and Ramsey Theory,
Bull. Amer. Math. Soc. (N.S.) 14 no. 2 (1986), 275--278.

\item\label{fk2}
H. Furstenberg, Y. Katznelson, 
A density version of the Hales-Jewett Theorem, 
J. Anal. Math. 57 (1991), 64--119.

\item\label{gkp}
J. Geelen, K. Kabell,
Projective geometries in dense matroids, 
J. Combin. Theory Ser. B 99 (2009), 1--8.

\item\label{gkw}
J. Geelen, J.P.S. Kung, G. Whittle,
Growth rates of minor-closed classes of matroids,
J. Combin. Theory. Ser. B 99 (2009), 420--427.

\item\label{gn}
J. Geelen, P. Nelson, 
The number of points in a matroid with no $n$-point line as a minor, 
J. Combin. Theory. Ser. B 100 (2010), 625--630.

\item\label{gn2}
J. Geelen, P. Nelson, 
On minor-closed classes of matroids with exponential growth rate, 
Adv. Appl. Math. (2012), to appear. 

\item\label{gw}
J. Geelen, G. Whittle,
Cliques in dense $\GF(q)$-representable matroids, 
J. Combin. Theory. Ser. B 87 (2003), 264--269.

\item\label{hj}
A. Hales and R. Jewett,
Regularity and positional games,
Trans. Amer. Math. Soc., 106(2) (1963), 222--229.

\item\label{kungextremal}
J.P.S. Kung,
Extremal matroid theory, in: Graph Structure Theory (Seattle WA, 1991), 
Contemporary Mathematics 147 (1993), American Mathematical Society, Providence RI, ~21--61.

\item\label{kungroundness}
J.P.S. Kung, 
Numerically regular hereditary classes of combinatorial geometries,
Geom. Dedicata 21 (1986), no. 1, 85--10.

\item\label{polymath}
D.H.J. Polymath,
A new proof of the density Hales-Jewett theorem,
arXiv:0910.3926v2 [math.CO], (2010) 1-34.

\item\label{sqf}
P. Nelson,
Growth rate functions of dense classes of representable matroids, 
J. Combin. Theory. Ser. B (2012), in press.

\item\label{thesis}
P. Nelson,
Exponentially Dense Matroids,
Ph.D thesis, University of Waterloo (2011). 

\item \label{oxley}
J. G. Oxley, 
Matroid Theory,
Oxford University Press, New York (2011).
\end{list}		
\end{document}